\documentclass{amsart}

\usepackage{amsmath,amssymb,amsthm} 
\usepackage{bbm} 
\usepackage{mathtools} 
\usepackage{hyperref} 
\usepackage{geometry} 
\usepackage{xcolor} 
\usepackage{xfp} 
\usepackage{ulem} 
\usepackage{todonotes} 
\usepackage{ifthen} 

\definecolor{theme}{RGB}{15, 87, 24} 
\definecolor{lighttheme}{RGB}{51, 153, 63} 
\definecolor{block}{RGB}{102, 60, 0} 
\definecolor{lightblock}{RGB}{255, 238, 214} 
\definecolor{alert}{RGB}{212, 43, 43} 

\newlength{\pagespace}
\newlength{\marginshift}
\hypersetup{
    colorlinks=true,
    linkcolor=theme,
    citecolor=lighttheme
}

\newtheorem{theorem}{Theorem}[section]
\newtheorem{proposition}[theorem]{Proposition}
\newtheorem{lemma}[theorem]{Lemma}
\newtheorem{corollary}[theorem]{Corollary}

\newcommand{\R}{\mathbb{R}}

\DeclarePairedDelimiterX\interval[1]{\{}{\}}{1,\dotsc,\ifthenelse{\equal{#1}{}}{n}{#1}}
\DeclarePairedDelimiterXPP{\e}[1]{\exp}{(}{)}{}{#1}
\let\P\relax
\let\O\relax
\let\o\relax
\DeclarePairedDelimiterXPP{\P}[1]{\mathbb{P}}{(}{)}{}{#1}
\DeclarePairedDelimiterXPP{\E}[1]{\mathbb{E}}{[}{]}{}{#1}
\DeclarePairedDelimiterXPP{\O}[1]{O}{(}{)}{}{#1}
\DeclarePairedDelimiterXPP{\o}[1]{o}{(}{)}{}{#1}
\newcommand{\I}[1]{\mathbbm{1}_{#1}}

\newcommand{\pts}{\mathcal{P}}
\newcommand{\inv}{\mathrm{Inv}}
\newcommand{\minv}{\mathcal{I}}
\newcommand{\rect}{\mathcal{T}}
\newcommand{\diag}{D}
\newcommand{\mass}{M}
\newcommand{\uniform}[1][{[0,1]}]{\textsc{Uniform}\left(#1\right)}
\newcommand{\ray}[1][2,1/(2\sigma^2)]{\textsc{Rayleigh}\left(#1\right)}

\newcommand{\poisson}[1][0,1]{\textsc{Poisson}\left(#1\right)}

\newcommand{\dconv}{\overset{\mathrm{d}}{\longrightarrow}}

\title{Minimal inversion of a permuton sample}
\author{Beno\^it Corsini}
\date{}

\begin{document}

\setcounter{tocdepth}{1}

\begin{abstract}
    Given a permutation $\sigma$, its corresponding \textit{inversion graph} is obtained by adding an edge between $i<j$ if and only if $\sigma(i)>\sigma(j)$.
    The first results on random inversion graphs come from Acan and Pittel~\cite{acan2013connected}, who studied the connected threshold for a uniform permutation with fixed inversion number, and Bhattacharya and Mukherjee~\cite{bhattacharya2017degree}, who mostly focused on the degrees of the graph when the permutation is chosen uniformly at random.
    
    In this work, we call \textit{minimal inversion} the minimal degree of the inversion graph and extend a theorem from Bhattacharya and Mukherjee~\cite[Theorem~3.5]{bhattacharya2017degree} to the case where the permutation is not only uniform, but obtained as the ordering of points sampled according to some distribution on the plane.
    Under regularity assumptions on the distribution, and for the appropriate $\alpha>0$, we show that the probability that the minimal inversion rescaled by $n^{\alpha/(\alpha+1)}$ is larger than $t$ behaves like $\exp(-ct^{\alpha+1})$ for some constant $c>0$ depending on the distribution.
    We further show that every $\alpha>0$ admits at least one corresponding distribution, thus proving that the minimal inversion can asymptotically scale as $n^\beta$ for any $\beta\in[0,1]$ (the cases $\beta=0$ and $\beta=1$ being obtained via the identity and anti-identity permutations, among others).
\end{abstract}

\maketitle

\tableofcontents

\section{Introduction}

The \textit{inversion graph} of a permutation $\sigma$ is the structure obtained by connecting any pair $\{i<j\}$ together if they form an inversion: $\sigma(i)>\sigma(j)$.
This model was first introduced to study structural properties of the corresponding permutation~\cite{pnueli1971transitive}, with a strong emphasis on the uniqueness of the connected component~\cite{acan2013connected,koh2007connected}.
Extending upon this work, Bhattacharya and Mukherjee~\cite{bhattacharya2017degree} studied several properties of the degrees of the graph, focusing on the case of a uniformly chosen permutation (although they sometimes allow broader assumptions).

When working on expanding results from uniform permutations to more generic models, it is common to either consider a parametrized model to which the uniform is a special case, models such as Mallows~\cite{mallows1957non}, Ewens~\cite{ewens1972sampling}, or more recently record-biased~\cite{bouvel2026record} permutations, or to consider restricted uniform permutations, usually via pattern-avoidance~\cite{simion1985restricted}.
However, with the introduction of permutons~\cite{hoppen2013limits}, these two approaches can now often fall under the same methodology.
Indeed, Mallows~\cite{starr2009thermodynamic}, Ewens~\cite{feray2013asymptotic}, record-biased~\cite{bouvel2026record}, and pattern-avoiding~\cite{borga2023skew} permutations are all known to converge to permutons in most cases.

The strength of permutons, however, is also their limitation.
Indeed, when studying a specific statistic, the infinite diversity of possible distributions potentially leads to erratic behaviour and thus often requires smoothness assumptions.
For example, the cycle structure requires an equi-continuous density~\cite{mukherjee2016fixed}, the longest increasing subsequence only applies when it is of linear order~\cite{dubach2024increasing}, and the structure of the binary search tree relies on a density close to the left boundary~\cite{corsini2025binary}.

In this work, the statistic we are interested in is the \textit{minimal inversion}, which corresponds to the minimal degree of the inversion graph.
We extend the result from Bhattacharya and Mukherjee~\cite[Theorem~3.5]{bhattacharya2017degree}, which states that the minimal inversion of a uniform permutation divided by $\sqrt{n}$ converges to a Rayleigh distribution with parameter $1/\sqrt{2}$.
In our setting, there exist two parameters $\alpha>0$ and $c>0$ depending on the distribution of the permuton such that the minimal inversion divided by $n^{\alpha/(\alpha+1)}$ has asymptotic tail probabilities equal to $\e{-ct^{\alpha+1}}$ (see Theorem~\ref{thm:main} and Theorem~\ref{thm:tile});
in the case of a uniform permutation, $\alpha=1$ and $c=1$, which exactly corresponds to~\cite[Theorem~3.5]{bhattacharya2017degree}.

Similar to the previous statistics (cycles, increasing subsequences, and binary search trees), our results require some smoothness assumption, namely the existence of a density in a neighbourhood of $(0,0)$ and $(1,1)$ along with a non-zero mass in the top-left and bottom-right corners of the unit square (see Section~\ref{sec:ass}).
Since the corner assumptions are often too restrictive, we further extend our results to permutons which can be decomposed into sub-permutons satisfying the previous assumptions (see Section~\ref{sec:tile});
for example, this generalization allows us to apply our results to permutons obtained as the marginal distribution of a fixed-size permutation (see Corollary~\ref{cor:permutation}).

\subsection{Fundamental definitions}\label{sec:def}

Let $\pts=\{(X_i,Y_i):1\leq i\leq n\}$ be an arbitrary set of points with no two identical coordinates.
For any index $i\in[n]=\interval{}$, we denote by $\inv_\pts(i)$ the set of (left and right) inversions created by the $i$-th point, that is
\begin{align}\label{eq:inv}
    \inv_\pts(i)
    =\Big\{j:(X_i-X_j)(Y_i-Y_j)<0\Big\}\,.
\end{align}
Similarly, if $\sigma$ is a permutation of $[n]$, then we define its set of inversion as $\inv_\sigma=\inv_{\pts_\sigma}$ where $\pts_\sigma=\{(i,\sigma(i)):1\leq i\leq n\}$.
It is worth mentioning that, in the context of permutations, the definition of $\inv$ is not completely standard as it usually refers to the total number of left (or right) inversions, and satisfies
\begin{align*}
    \big|\big\{i<j:\sigma(i)>\sigma(j)\big\}\big|
    =\frac{1}{2}\sum_{i\in[n]}\big|\inv_\sigma(i)\big|\,.
\end{align*}
Finally, given a set of points $\pts$, we call \textit{minimal inversion} the minimal size of an inversion set, that is the minimal value of $|\inv_\pts(i)|$ over $i\in[n]$.

A \textit{permuton} is a probability measure $\mu$ on $[0,1]^2$ with uniform marginals.
That is, for any measurable set $A$, we have $\mu([0,1]\times A)=\mu(A\times[0,1])=\lambda_1(A)$, where $\lambda_1$ is the standard Lebesgue measure on $\R$.
The main goal of this article is to study the distribution of the minimal inversion obtained from a set of points sampled according to a permuton.
More precisely, for any permuton $\mu$, we let $\minv_n^\mu$ be the random variable defined by
\begin{align}\label{eq:main}
    \minv_n^\mu
    =\min\Big\{\big|\inv_\pts(i)\big|:i\in[n]\Big\}\,,
\end{align}
where $\pts$ are $n$ points sampled according to $\mu$.
Before stating our main results, we need to state the required assumptions on $\mu$.

\subsection{Assumptions}\label{sec:ass}

We say that $\mu$ is \textit{BL-smooth} (for \textit{Bottom-Left}) if there exists $0<\epsilon\leq1/2$ such that $\mu$ admits a density $f$ on $[0,\epsilon]^2$ satisfying the following properties.
There exists $\alpha>0$ and a measurable function $\phi:[0,\infty)^2\rightarrow[0,\infty)$ bounded on every compact and strictly positive almost-everywhere such that $f(tx,ty)\sim t^{\alpha-1}\phi(x,y)$ when $t\rightarrow0$.
We then say that $\mu$ is \textit{TR-smooth} (for \textit{Top-Right}) if its symmetric distribution with respect to $y=1-x$ is BL-smooth.

Note that if $\mu$ is both BL and TR-smooth, then the values of $\alpha$ and $\phi$ do not need to be identical between the BL and TR parts.
We also mention here that not any $\phi$ can satisfy such assumption, since this definition implies that $\phi(tx,ty)=t^{\alpha-1}\phi(x,y)$;
a simple example of such a function is $\phi(x,y)=(x+y)^{\alpha-1}$, which is fundamental for the example from Section~\ref{sec:standard example}.

For any $\alpha>0$ and $\phi$ as above, we say that $\mu$ is \textit{$(\alpha,\phi)$-smooth} if it is both BL and TR-smooth, with its density function satisfying $f(tx,ty)+f(1-tx,1-ty)\sim t^{\alpha-1}\phi(x,y)$.
Observe that, by letting $(\alpha_-,\phi_-)$ and $(\alpha_+,\phi_+)$ be the parameters respectively for the BL and TR smoothness of $\mu$, then we have that $\alpha=\min\{\alpha_-,\alpha_+\}$ and $\phi=\phi_-\I{\alpha_-\leq\alpha_+}+\phi_+\I{\alpha_+\leq\alpha_-}$.

The BL-smoothness of a permuton allows us to control the degrees of the nodes corresponding to points within $[0,\epsilon]^2$ (see Proposition~\ref{prop:corner inv});
equivalently by symmetry, the TR-smoothness controls the degrees of the points within $[1-\epsilon,1]^2$.
However, this does not necessarily prevent smaller degrees from existing in the graph (see Section~\ref{sec:counter}).
In order to prevent small degrees elsewhere in the graph, we need an extra assumption on $\mu$.

We say that $\mu$ is \textit{corner-balanced} if, for any $0<\epsilon\leq1/2$, there exists $0<\rho<1$ such that
\begin{align*}
    \min\Big\{\mu\big([0,\epsilon]\times[\rho,1]\big),\mu\big([1-\epsilon,1]\times[0,\rho]\big)\Big\}>0\,.
\end{align*}
This assumption implies that, for any $\epsilon$, the degrees of the nodes corresponding to points with $\epsilon\leq X_i\leq1-\epsilon$ have degree at least $n$ (see Lemma~\ref{lem:top-bottom}).
This, however, needs to be combined with the smoothness assumption to be able to fully control the minimum degree (see Lemma~\ref{lem:top-bottom} and Lemma~\ref{lem:left}).

\medskip

We conclude this section by saying that a permuton $\mu$ is \textit{$(\alpha,\phi)$-standard} if it is $(\alpha,\phi)$-smooth and corner-balanced.
Using the previous definitions, a permuton is $(\alpha,\phi)$-standard if and only if there exists $f:[0,1]^2\rightarrow[0,\infty)$, $\alpha_\pm>0$, and $\phi_\pm:[0,\infty)^2\rightarrow[0,\infty)$ bounded on every compact and strictly positive almost-everywhere such that, for any $\epsilon>0$ small enough, it satisfies the following properties.
\begin{itemize}
    \item $\mu$ has density $f$ on $[0,\epsilon]^2$ and on $[1-\epsilon]^2$.
    \item $f(tx,ty)\sim t^{\alpha_--1}\phi_-(x,y)$ and $f(1-tx,1-ty)\sim t^{\alpha_+-1}\phi_+(x,y)$ when $t\rightarrow0$.
    \item $f(tx,ty)+f(1-tx,1-ty)\sim t^{\alpha-1}\phi(x,y)$ (equivalently $\alpha=\min\{\alpha_-,\alpha_+\}$ and $\phi=\phi_-\I{\alpha_-\leq\alpha_+}+\phi_+\I{\alpha_+\leq\alpha_-}$).
    \item $\mu([0,\epsilon]\times[\rho,1])>0$ and $\mu([1-\epsilon,1]\times[0,\rho])>0$ for some $0<\rho<1$.
\end{itemize}
We now show the asymptotic distribution of the minimal inversion of a standard permuton.

\subsection{Main result}

We denote by $\ray[a,b]$ the distribution of the random variable $R$ whose support is $[0,\infty)$ and such that, for any $t\geq0$, we have $\P{R>t}=e^{-bt^a}$.
Note that this is a generalization of the standard Rayleigh distribution, corresponding to $\ray$.
By convention, the $\ray[\infty,b]$ distribution is simply equal to $b$ with probability $1$.

\begin{theorem}\label{thm:main}
    Let $\mu$ be a $(\alpha,\phi)$-standard permuton as defined in Section~\ref{sec:ass}.
    Let $\minv_n^\mu$ be the minimal inversion of $n$ points distributed according to $\mu$, as defined in~\eqref{eq:main}.
    Finally, denote by $\Delta=\{(x,y)\in[0,1]^2:x+y\leq1\}$ the unit square triangle.
    Then, the random variable $\minv_n^\mu$ satisfies
    \begin{align*}
        n^{-\alpha/(\alpha+1)}\minv_n^\mu
        \dconv\ray[\alpha+1,\int_\Delta\phi]
    \end{align*}
    as $n$ goes to infinity, where the convergence occurs in distribution.
\end{theorem}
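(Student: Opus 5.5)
We will show that for every fixed $t>0$,
\begin{align*}
    \P{\minv_n^\mu>tn^{\alpha/(\alpha+1)}}\longrightarrow\e{-t^{\alpha+1}\int_\Delta\phi}\,,
\end{align*}
and we will get this by a Poisson approximation for the number of points with small inversion set. Set $k=k_n=tn^{\alpha/(\alpha+1)}$. We call a point $i$ \emph{light} if $|\inv_\pts(i)|\leq k$. Then $\{\minv_n^\mu>k\}$ is exactly the event that there is no light point.

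\textbf{Step 1: confinement to the corners.} First we show that, with probability $1-o(1)$, every light point lies in $[0,\delta_n]^2\cup[1-\delta_n,1]^2$, where $\delta_n\to0$ is of order $n^{-1/(\alpha+1)}$ up to a slowly growing factor. Corner-balance (Lemma~\ref{lem:top-bottom}) gives that every point with $\epsilon\leq X_i\leq1-\epsilon$ has a linear number of inversions. By symmetry it then suffices to treat points with $X_i<\epsilon$ and $Y_i>\epsilon$. Such a point is inverted with every point of $[X_i,1]\times[0,Y_i]$, and that region contains $[\epsilon,1]\times[0,\epsilon]$. The uniform marginals together with corner-balance make the mass of this region bounded below, which gives a linear count (this is Lemma~\ref{lem:left}). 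Inside $[0,\epsilon]^2$ we use the homogeneous scaling of the density. A point at position $(x,y)$ has $\mu$-mass of its inversion region $\{(u,v):(u-x)(v-y)<0\}$ at least the mass of $[0,x]\times[y,1]\cup[x,1]\times[0,y]$. Because the marginals are uniform, this mass is roughly $x+y$ minus the mass of $[0,x]\times[0,y]$, and the latter is $O((x+y)^{\alpha+1})$. So points with $x+y\gg n^{-1/(\alpha+1)}$ have many more than $k$ inversions with overwhelming probability, by Chernoff and a union bound.

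\textbf{Step 2: first moment.} For a point at $(x,y)=(sn^{-1/(\alpha+1)}a,\,sn^{-1/(\alpha+1)}b)$ in the bottom-left corner, the expected number of inversions is $n\,m(x,y)$ with $m(x,y)=x+y-2\mu([0,x]\times[0,y])$. Writing $x+y=r$, this is about $nr$, so the point is light iff $r\lesssim k/n=tn^{-1/(\alpha+1)}$. Given its position, the inversion count is Binomial$(n-1,m)$ with mean of order $k\to\infty$, so it concentrates. Hence
\begin{align*}
    \E{\#\text{light}}\approx n\,\mu\bigl(\{(x,y)\in[0,\epsilon]^2:x+y\leq k/n\}\bigr)+\text{(TR term)}\,.
\end{align*}
Substituting $(x,y)=(k/n)(u,v)$ and using $f(\tau u,\tau v)\sim\tau^{\alpha-1}\phi(u,v)$, dominated convergence (via local boundedness of $\phi$) gives $n(k/n)^{\alpha+1}\int_\Delta\phi_-=t^{\alpha+1}\int_\Delta\phi_-$. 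The TR corner behaves the same way. The corner with the larger exponent contributes $o(1)$, so the total tends to $t^{\alpha+1}\int_\Delta\phi$. The concentration error, i.e.\ points with $r$ within $o(k/n)$ of the threshold, contributes $o(1)$ by the same scaling.

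\textbf{Step 3: Poisson limit (main obstacle).} It remains to show that the number of light points is asymptotically Poisson, or at least that $\P{\#\text{light}=0}\to e^{-\lambda}$ with $\lambda=t^{\alpha+1}\int_\Delta\phi$. The difficulty is dependence: light points all live in a tiny corner and share the sample. The first thing I would try is to compute factorial moments $\E{(\#\text{light})_j}\to\lambda^j$. For $j$ fixed points at positions in the corner, each inversion count equals $n\,m(\cdot)$ plus Binomial fluctuations of order $\sqrt{k}$. Conditioning on the $j$ positions, the counts are jointly determined by the remaining $n-j$ points. They are strongly positively correlated, but they are dominated by deterministic means. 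The event that all $j$ are light is therefore asymptotically the event that each position satisfies $x+y\leq k/n$ up to $o(k/n)$, and the probability factorizes as $(n\mu(\text{region}))^j/n^j\cdot(n)_j\to\lambda^j$. Making the $o(k/n)$ window uniform requires care because the fluctuation $\sqrt{k}$ is $o(k)$. An alternative, if the moment method is messy, is to condition on the (Poissonized) sample outside the $Ck/n$-corner. Then the inversion counts of corner points are essentially $n(x+y)$ plus a common error, and the number of corner points is Poisson with mean $\lambda$ in the limit. Combining Steps 1--3 yields the stated convergence to $\ray[\alpha+1,\int_\Delta\phi]$.
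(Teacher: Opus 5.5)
Your proposal is correct in outline. It takes a different route from the paper, though both proofs rest on the same three facts:
\begin{itemize}
\item corner-balance forces linear degree away from $[0,\epsilon]^2\cup[1-\epsilon,1]^2$;
\item near a corner, a point at $(x,y)$ has degree about $n(x+y)$;
\item the mass of $\{x+y\le s\}$ near the corner scales like $s^{\alpha+1}\int_\Delta\phi$.
\end{itemize}

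\textbf{How the paper does it.} The paper never studies the number of light points or its moments. It fixes the box $S_{a,b}$, with $a_n\to0$ and $b>t$ fixed. The population of this box is exactly binomial, hence asymptotically Poisson. Each degree there is sandwiched between $|S^x_{n^\gamma X_i}|+|S^y_{n^\gamma Y_i}|$ and that quantity minus $O(|S_{0,b}|)$. The law of large numbers for these strip counts then turns the degrees, conditionally on the box, into the deterministic values $n(X_i+Y_i)$, so the answer is a thinned Poisson. Points of $[0,\epsilon_n]^2$ outside the box are handled by the deterministic bound $|S^x_b|-|S_\epsilon\cap(S^x_b\cup S^y_b)|\approx bn^\beta$, not by Chernoff. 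The two corners are treated separately and combined through an asymptotic-independence argument and a rescaling by $n^{\beta_\pm}$.

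\textbf{How your route differs, and what each buys.} You use the exact conditional law $\mathrm{Bin}(n-1,\,x+y-2\mu([0,x]\times[0,y]))$ of a degree given the position. You confine light points by Chernoff plus a union bound, and you count light points in both corners at once.
\begin{itemize}
\item Gain: the unequal-exponent case falls out of the first moment, and the paper's somewhat informal independence step is not needed.
\item Gain: you get more, namely that the number of vertices of degree at most $tn^\beta$ is asymptotically Poisson.
\item Cost: you must compute factorial moments of dependent indicators, which the paper sidesteps by conditioning on the box population. Your ``alternative'' in Step~3 is essentially the paper's method.
\end{itemize}

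\textbf{Closing Step~3.} You do not need the probabilities to factorize given the positions $P_1,\ldots,P_j$. Take $\omega\to\infty$ slowly and set $A_\pm=\{(x,y): n\,m(x,y)\le k\pm\omega\sqrt{k}\}$, absorbing the at most $j-1$ mutual inversions into $A_-$. The indicator that all $j$ points are light is then bounded above by $\prod_r \mathbbm{1}_{\{P_r\in A_+\}}$ and below by $\prod_r \mathbbm{1}_{\{P_r\in A_-\}}$, up to union-bound error terms. Their expectations are either $O(n^{-j}e^{-c\omega^2})$, for positions within $O(\sqrt{k}/n)$ of the threshold, or superpolynomially small, for positions with $m\ge 2k/n$. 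So the correlation between the $j$ counts is harmless.

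\textbf{A minor slip.} In Step~1 the region $[0,x]\times[y,1]\cup[x,1]\times[0,y]$ is exactly the inversion region. Its mass is $x+y-2\mu([0,x]\times[0,y])$, as you correctly write in Step~2. The bound $\mu([0,s]^2)=o(s)$ that you need there uses the integrated form of the scaling, just as the paper does in~\eqref{eq:f int}.
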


The proof of Theorem~\ref{thm:main} can be found in Section~\ref{sec:main}.
Combining this result with the fact that any $\alpha>0$ admits an acceptable $\mu$, as shown in Section~\ref{sec:standard example}, we see that the minimal inversion can scale as $n^\beta$ for any $0<\beta<1$.
Moreover, one can easily generate permutons $\mu$ which scale linearly or of constant order (for example the identity and anti-identity permuton, but also examples from Section~\ref{sec:counter}), so all scales from constant to linear order are possible for the minimal inversion of a permuton sample.

The assumptions of Theorem~\ref{thm:main} are based on technical properties required for the convergence of the minimal degree to hold.
However, most common permutons either do not satisfy the standard assumption (usually because they do not admit a density), or have a piece-wise continuous and strictly positive density, which can also be chosen to be continuous at $(0,0)$ and $(1,1)$.
We thus provide here the simpler statement of Theorem~\ref{thm:main} in the latter case.

\begin{corollary}\label{cor:main}
    Let $\mu$ be a permuton which admits a piece-wise continuous and strictly positive density $f:[0,1]^2\rightarrow(0,\infty)$ continuous at $(0,0)$ and at $(1,1)$.
    Let $\minv_n^\mu$ be the minimal inversion of $n$ points distributed according to $\mu$, as defined in~\eqref{eq:main}.
    Then, the random variable $\minv_n^\mu$ satisfies
    \begin{align*}
        n^{-1/2}\minv_n^\mu
        \dconv\ray[2,\frac{f(0,0)+f(1,1)}{2}]
    \end{align*}
    as $n$ goes to infinity, where the convergence occurs in distribution.
\end{corollary}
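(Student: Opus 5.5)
The plan is to derive Corollary~\ref{cor:main} directly from Theorem~\ref{thm:main}. I will check that such a $\mu$ is $(1,\phi)$-standard with $\phi\equiv f(0,0)+f(1,1)$, and then compute $\int_\Delta\phi$.

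\textbf{Smoothness.} Since $f$ is continuous at $(0,0)$, for any fixed $(x,y)\in[0,\infty)^2$ we have $f(tx,ty)\to f(0,0)$ as $t\to0$. Because $f$ is strictly positive, $f(0,0)>0$, so $f(tx,ty)\sim t^{0}\phi_-(x,y)$ with $\alpha_-=1$ and $\phi_-\equiv f(0,0)$. This constant function is bounded on compacts, strictly positive, and trivially $0$-homogeneous. Applying the same argument at $(1,1)$ gives $\alpha_+=1$ and $\phi_+\equiv f(1,1)$. Hence $\alpha=\min\{\alpha_-,\alpha_+\}=1$. Since $\alpha_-=\alpha_+$, both indicators in the definition of $\phi$ equal one, and
\begin{align*}
    f(tx,ty)+f(1-tx,1-ty)\longrightarrow f(0,0)+f(1,1)=\phi(x,y)\,.
\end{align*}
The density on $[0,\epsilon]^2$ and $[1-\epsilon,1]^2$ is the global $f$. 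So $\mu$ is $(1,\phi)$-smooth for any $\epsilon\le1/2$.

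\textbf{Corner-balance.} Fix $0<\epsilon\le1/2$ and take, say, $\rho=1/2$. Then
\begin{align*}
    \mu\big([0,\epsilon]\times[1/2,1]\big)=\int_{[0,\epsilon]\times[1/2,1]}f>0\,,
\end{align*}
because $f>0$ everywhere and the set has positive Lebesgue measure. The same holds for $[1-\epsilon,1]\times[0,1/2]$. Piecewise continuity ensures $f$ is measurable, and in fact positivity of the integral needs nothing more. Hence $\mu$ is $(1,\phi)$-standard.

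\textbf{Conclusion.} Theorem~\ref{thm:main} gives $n^{-1/2}\minv_n^\mu\dconv\ray[2,\int_\Delta\phi]$. Since $\lambda_2(\Delta)=1/2$, we have $\int_\Delta\phi=(f(0,0)+f(1,1))/2$, which is exactly the claimed limit.

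The only mildly delicate point is reading the definition of BL-smoothness correctly. The asymptotic $f(tx,ty)\sim t^{\alpha-1}\phi(x,y)$ only needs to hold pointwise, and $\phi$ only almost-everywhere positive. Continuity at the corner gives the pointwise statement, and a constant $\phi$ satisfies everything else. No uniformity is required beyond what Theorem~\ref{thm:main} already assumes.
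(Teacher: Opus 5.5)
Your proposal is correct and follows the paper's proof essentially verbatim. Like the paper, you verify that $\mu$ is $(1,\phi)$-standard with $\phi\equiv f(0,0)+f(1,1)$: smoothness comes from continuity and positivity at the corners, and corner-balance from $f>0$ with $\rho=1/2$. You then apply Theorem~\ref{thm:main} and use $\lambda_2(\Delta)=1/2$.
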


\begin{proof}
    To prove this theorem, we simply need to observe that $\mu$ is $(1,\phi)$-standard with $\phi$ constant equal to $f(0,0)+f(1,1)$.
    Indeed, we see that $\mu$ is BL and TR-smooth since it admits $f$ as a density on $[0,\epsilon]^2$ and $[1-\epsilon,1]^2$ and $f(tx,ty)\sim f(0,0)$ and $f(1-tx,1-ty)\sim f(1,1)$ when $t\rightarrow0$.
    Furthermore $\mu$ is corner-balanced since $f$ is strictly positive and so $\mu([0,\epsilon]\times[1/2,1])>0$ and $\mu([1-\epsilon,1]\times[0,1/2])>0$.
    It follows that $\mu$ is $(1,\phi)$-standard and we can apply Theorem~\ref{thm:main}.
    The desired result follows from the fact that $\phi$ is constant equal to $f(0,0)+f(1,1)$ and the unit square triangle $\Delta$ has measure $1/2$.
\end{proof}

We observe that Corollary~\ref{cor:main} is a direct extension of~\cite[Theorem~3.5]{bhattacharya2017degree} stating that the limit of the minimum inversion on the uniform permuton divided by $\sqrt{n}$ is $\ray[2,1]$.

While the smoothness assumption plays a key role in the asymptotic behaviour of the minimal inversion, the corner-balanced property may seem unnecessary as several natural models of permutons do not satisfy it, yet might still have a similar limiting distribution (consider for example the uniform permuton with support on $[0,1/2]^2$ and $[1/2,1]^2$).
To circumvent this issue, we now extend the results of Theorem~\ref{thm:main} to permutons which can be decomposed into several squares on which the corresponding distribution corresponds to a standard permuton.

\subsection{Extension to tile-standard permutons}\label{sec:tile}

We call \textit{tiling} a family of closed and non-empty interior squares $\rect=\{S_1,\ldots,S_k\}$ such that their horizontal and vertical intervals cover the whole segment $[0,1]$ and their interior do not intersect.
In other words, the horizontal and vertical intervals of $S_1,\ldots,S_k$ almost form a partition, except that they are all closed so some of them will intersect on their boundary.

Given a tiling $\rect=\{S_1,\ldots,S_k\}$, we can define an ordering on their $x$ coordinates by letting
\begin{align*}
    S_\ell\prec_xS_m
    ~\Longleftrightarrow~
    \max\big\{x:(x,y)\in S_\ell\big\}\leq\min\big\{x:(x,y)\in S_m\big\}\,.
\end{align*}
Since the sides of the squares are intervals, this relationship is well defined and any pair $(S_\ell,S_m)$ with $\ell\neq m$ satisfies either $S_\ell\prec_xS_m$ or $S_m\prec_xS_\ell$.
We similarly defined $\prec_y$ by replacing $x$ in both maximum and minimum with $y$.
We now always assume that a tiling $\rect=\{S_1,\ldots,S_k\}$ is defined so that $S_1\prec_x\cdots\prec_xS_k$ and let $\sigma_\rect$ be the unique permutation such that
\begin{align}\label{eq:rect perm}
    \sigma_\rect(\ell)<\sigma_\rect(m)
    ~\Longleftrightarrow~
    S_\ell\prec_yS_m\,;
\end{align}
we call $\sigma_\rect$ the \textit{permutation corresponding to the tiling}.

Given a tiling $\rect=\{S_1,\ldots,S_k\}$ and their corresponding permutation $\sigma=\sigma_\rect$, we say that $\ell$ is \textit{diagonally splitting} (or simply \textit{splitting}) if $\inv_\sigma(\ell)=\emptyset$;
in other words, it is splitting if there are no other squares to its upper-left or bottom-right side.
We denote by $\diag_\rect$ the set of splitting indices.
Finally, for any $\ell$, denote by
\begin{align*}
    L_\rect(\ell)
    =\sum_{m\in\inv_\sigma(\ell)}\sqrt{\lambda_2(S_m)}
\end{align*}
the total length of the squares creating inversions with $\ell$, where $\lambda_2$ is the standard Lebesque measure on $\R^2$ (so that $\sqrt{\lambda_2(S_m)}$ is the side length of $S_m$) and $\inv_\sigma$ is the set of inversions as defined in~\eqref{eq:inv}.
Extend this definition to $\mass_\rect$, the \textit{minimal anti-diagonal mass}, defined by
\begin{align}\label{eq:mass}
    \mass_\rect
    =\min\Big\{L_\rect(\ell):\ell\in[k]\Big\}\,.
\end{align}
Using the convention that an empty sum equals $0$, we see that $\mass_\rect=0$ if and only if $D_\rect$ is not empty, but also if and only if the minimal inversion of $\sigma$ is equal to $0$.

\medskip

Given a closed and non-empty interior square $S=[x_0,x_0+\delta]\times[y_0,y_0+\delta]\subseteq[0,1]^2$, we call \textit{sub-permuton} a distribution $\mu_S$ on $S$ with Lebesgue-distributed marginals.
Any sub-permuton can be linearly transformed into a permuton on $[0,1]$;
for example, if $\mu_S$ has a density $f_S$ on $S$, then the shifted density $\Tilde{f}_S(x,y)=\delta f(x_0+\delta x,y_0+\delta y)$ corresponds to a permuton on $[0,1]^2$.
We thus say that $\mu_S$ is $(\alpha,\phi)$-standard if its rescaled permuton is $(\alpha,\delta^\alpha\phi)$-standard.
Observe that the definition of standard is coherent since a permuton is actually also a sub-permuton with $\delta=1$.

A permuton $\mu$ is now said to be $(\rect,\alpha,\phi)$-tile-standard is it satisfies the following properties.
First, $\rect=\{S_1,\ldots,S_k\}$ is a tiling such that
\begin{align*}
    \mu\Big(S_1\cup\cdots\cup S_k\Big)
    =1\,.
\end{align*}
Second, for any $\ell\in[k]$, the restriction $\mu_\ell$ of $\mu$ to $S_\ell$ is a $(\alpha_\ell,\phi_\ell)$-standard permuton, for some $\alpha_\ell>0$ and $\phi_\ell$.
Third and final, the pair $(\alpha,\phi)$ satisfies
\begin{align*}
    \alpha
    =\min\Big\{\alpha_\ell:\ell\in\diag_\rect\Big\}
\end{align*}
and
\begin{align*}
    \phi
    =\sum_{\ell\in\diag_\rect}\phi_\ell\I{\alpha_\ell=\alpha}\,,
\end{align*}
where $\diag_\rect$ is the set of splittings.
In the case where $\diag_\rect=\emptyset$, we let $\alpha=\infty$ and $\phi=0$.

It is worth mentioning that the parameters $(\rect,\alpha,\phi)$ involved in the definition of tile-standard permutons are unique (see Proposition~\ref{prop:unique tile}).
This comes from the fact that standard permutons cannot be decomposed into non-trivial tilings, since they need non-zero density close to their bottom-left and top-right corners (smooth) and non-zero mass towards their top-left and bottom-right corners (corner-balanced).
We also mention here that any $(\alpha,\phi)$-standard permuton is also a $(\{[0,1]^2\},\alpha,\phi)$-tile-standard permuton.
We now state the convergence of the minimal inversion for a tile-standard permuton.

\begin{theorem}\label{thm:tile}
    Let $\mu$ be a $(\rect,\alpha,\phi)$-tile-standard permuton as defined in Section~\ref{sec:tile}.
    Let $\minv_n^\mu$ be the minimal inversion of $n$ points distributed according to $\mu$, as defined in~\eqref{eq:main}.
    Finally, denote by $\Delta=\{(x,y)\in[0,1]^2:x+y\leq1\}$ the unit square triangle.
    Then, the random variable $\minv_n^\mu$ satisfies
    \begin{align*}
        n^{-\alpha/(\alpha+1)}\minv_n^\mu
        \dconv\ray[\alpha+1,\mass_\rect+\int_\Delta\phi]
    \end{align*}
    as $n$ goes to infinity, where $\mass_\rect$ is defined in~\eqref{eq:mass} and the convergence occurs in distribution.
\end{theorem}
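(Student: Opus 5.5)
The plan is to reduce Theorem~\ref{thm:tile} to Theorem~\ref{thm:main} applied separately inside each tile, by conditioning on how many sample points fall in each tile. Write $\delta_\ell=\sqrt{\lambda_2(S_\ell)}$ for the side of $S_\ell$. Since the restriction of $\mu$ to $S_\ell$ has Lebesgue marginals on the sides of $S_\ell$, its total mass is $\delta_\ell$.

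\textbf{Step 1: decompose the degrees.} Let $N_\ell$ be the number of sample points in $S_\ell$, so that $(N_1,\dots,N_k)$ is multinomial with parameters $(n;\delta_1,\dots,\delta_k)$. Conditionally on these counts, the points in the different tiles are independent, and those in $S_\ell$ are i.i.d.\ from the normalised sub-permuton. Boundaries of tiles have measure zero, so almost surely the following holds. If $m\in\inv_{\sigma_\rect}(\ell)$, every point of $S_m$ forms an inversion with every point of $S_\ell$. If $m\notin\inv_{\sigma_\rect}(\ell)$, no such pair is an inversion. Hence, for a point $i$ in $S_\ell$,
\begin{align*}
    \big|\inv_\pts(i)\big|
    =\big|\inv_{\pts_\ell}(i)\big|+\sum_{m\in\inv_{\sigma_\rect}(\ell)}N_m\,,
\end{align*}
where $\pts_\ell$ denotes the points lying in $S_\ell$. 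Consequently
\[
\minv_n^\mu=\min_\ell\Big(\minv^{(\ell)}+\sum_{m\in\inv_{\sigma_\rect}(\ell)}N_m\Big),
\]
where $\minv^{(\ell)}$ is the minimal inversion of the $N_\ell$ points in tile $\ell$. The rescaled tile permuton is $(\alpha_\ell,\delta_\ell^{\alpha_\ell}\phi_\ell)$-standard, so, conditionally on $N_\ell$, $\minv^{(\ell)}$ has the law of $\minv_{N_\ell}^{\tilde\mu_\ell}$ for a standard permuton $\tilde\mu_\ell$.

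\textbf{Step 2: the case $\diag_\rect=\emptyset$.} Here every $L_\rect(\ell)>0$ and $\alpha=\infty$, so the claim is that $\minv_n^\mu/n\to\mass_\rect$ in probability. By the law of large numbers, $\sum_{m\in\inv_{\sigma_\rect}(\ell)}N_m/n\to L_\rect(\ell)$. By Theorem~\ref{thm:main}, $\minv^{(\ell)}=O_{\mathbb P}(N_\ell^{\alpha_\ell/(\alpha_\ell+1)})=o_{\mathbb P}(n)$. Taking the minimum over the finitely many $\ell$ gives the limit $\mass_\rect$.

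\textbf{Step 3: the case $\diag_\rect\neq\emptyset$.} Now $\mass_\rect=0$. For non-splitting $\ell$ the extra sum is at least $cn$ with high probability, which is much larger than $n^{\alpha/(\alpha+1)}$, so these tiles do not matter. For splitting $\ell$ the degree is purely internal. Conditionally on the counts, and using independence across tiles,
\begin{align*}
    \P{\minv_n^\mu>tn^{\alpha/(\alpha+1)}}
    =\E{\prod_{\ell\in\diag_\rect}\P{\minv_{N_\ell}^{\tilde\mu_\ell}>t\,n^{\alpha/(\alpha+1)}\,\Big|\,N_\ell}}+o(1)\,.
\end{align*}
Write $n^{\alpha/(\alpha+1)}=s_\ell N_\ell^{\alpha_\ell/(\alpha_\ell+1)}$. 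Since $N_\ell/n\to\delta_\ell$, we get
\[
s_\ell\to t\,\delta_\ell^{-\alpha/(\alpha+1)}\quad\text{if }\alpha_\ell=\alpha,\qquad s_\ell\to0\quad\text{if }\alpha_\ell>\alpha.
\]
Theorem~\ref{thm:main} then gives the factor limits. If $\alpha_\ell=\alpha$, the factor tends to
\[
\exp\Big(-\delta_\ell^{\alpha}\textstyle\int_\Delta\phi_\ell\cdot t^{\alpha+1}\delta_\ell^{-\alpha}\Big)=e^{-t^{\alpha+1}\int_\Delta\phi_\ell},
\]
so the $\delta_\ell^\alpha$ normalisation in the definition of standard sub-permutons cancels exactly. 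If $\alpha_\ell>\alpha$, the factor tends to $1$. The product therefore tends to $\exp(-t^{\alpha+1}\int_\Delta\phi)$ with $\phi=\sum_{\ell\in\diag_\rect}\phi_\ell\I{\alpha_\ell=\alpha}$, as required.

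\textbf{Main obstacle.} The delicate point is applying Theorem~\ref{thm:main} with a random number of points $N_\ell$ and a threshold that moves with $n$. I would handle it as follows. The limiting distribution function is continuous, so by P\'olya's theorem the convergence in Theorem~\ref{thm:main} is uniform in the threshold. Combined with $N_\ell/n\to\delta_\ell$ in probability (restricting to the event $|N_\ell-\delta_\ell n|\le n^{2/3}$) and dominated convergence, this justifies passing the limit inside the expectation. One must also check two things: that the restriction of $\mu$ to $S_\ell$, after normalising by $\delta_\ell$ and rescaling, is exactly the standard permuton to which Theorem~\ref{thm:main} applies; and that ties on common tile boundaries occur with probability zero.
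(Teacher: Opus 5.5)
Your proposal is correct and follows essentially the same route as the paper. You write each degree as the internal inversions within the tile plus the counts $N_m$ of the inverting tiles, and use the law of large numbers for $(N_1,\ldots,N_k)/n$ together with Theorem~\ref{thm:main} on each rescaled tile; in the splitting case you condition on the counts to get a product of Rayleigh tails in which the $\delta_\ell^{\alpha}$ normalisation cancels, and tiles with $\alpha_\ell>\alpha$ contribute a factor tending to $1$. Your P\'olya-type uniformity argument for a random $N_\ell$ and a moving threshold makes rigorous the step the paper handles informally by conditioning on $N_\ell=(\delta_\ell+o(1))n$; note also a small slip, where you should have written $t\,n^{\alpha/(\alpha+1)}=s_\ell N_\ell^{\alpha_\ell/(\alpha_\ell+1)}$ to match the limits you state for $s_\ell$.
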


The proof of Theorem~\ref{thm:tile} can be found in Section~\ref{sec:tile result}.
The strategy to prove this result is actually to split the limit according to two cases: whether $\diag_\rect$ is empty or not.
\begin{itemize}
    \item If the set of splittings $\diag_\rect$ is non empty, then $\alpha\in(0,\infty)$ and $\mass_\rect=0$, so the statement of the theorem is identical to that of Theorem~\ref{thm:main}, except that the definition of $\phi$ takes into account the tiling structure of $\mu$.
    \item If the set of splittings $\diag_\rect$ is empty, then $\alpha=\infty$, $\mass_\rect>0$, and the limiting distribution is deterministic, so the convergence rewrites as $n^{-1}\minv_n^\mu\rightarrow\mass_\rect>0$.
\end{itemize}
Using that any standard permuton is also tile-standard, Theorem~\ref{thm:tile} strictly extends Theorem~\ref{thm:main}.
However, we keep the two results separated for two reasons: since most non-trivial permutons are standard, and since the definitions involved in tile-standard are a lot more involved.
We now further provide extensions of Theorem~\ref{thm:tile} for when the decompositions are not only standards, but admit a piece-wise continuous and strictly positive density.

\begin{corollary}\label{cor:tile}
    Let $\mu$ be a permuton decomposed into $\mu_1,\ldots,\mu_k$ with respect to the tiling $\rect=\{S_1,\ldots,S_k\}$ with side lengths $\delta_1,\ldots,\delta_k$ and bottom-left corners $(x_1,y_1),\ldots,(x_k,y_k)$.
    Assume that each measure $\mu_\ell$ admits a piece-wise continuous, strictly positive density $f_\ell$ on $S_\ell$ which is also top-right continuous at $(x_\ell,y_\ell)$ and bottom-left continuous at $(x_\ell+\delta_\ell,y_\ell+\delta_\ell)$.
    Let $\minv_n^\mu$ be the minimal inversion of $n$ points distributed according to $\mu$, as defined in~\eqref{eq:main}.
    Then, the random variable $\minv_n^\mu$ satisfies one of the following two cases, as $n\rightarrow\infty$.
    \begin{itemize}
        \item If the set of splittings $\diag_\rect$ is non empty, then
        \begin{align*}
            n^{-1/2}\minv_n^\mu
            \dconv\ray[2,\sum_{\ell\in\diag_\rect}\frac{f_\ell(x_\ell,y_\ell)+f_\ell(x_\ell+\delta_\ell,y_\ell+\delta_\ell)}{2}]
        \end{align*}
        \item If the set of splittings $\diag_\rect$ is empty, then
        \begin{align*}
            n^{-1}\minv_n^\mu
            \dconv\mass_\rect
            =\min\left\{\sum_{m\in\inv_{\sigma_\rect}(\ell)}s_m:\ell\in[k]\right\}\,.
        \end{align*}
    \end{itemize}
    Both convergence occur in distribution, although the second one also occurs in probability.
\end{corollary}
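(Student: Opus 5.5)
This corollary should follow from Theorem~\ref{thm:tile}, in the same way Corollary~\ref{cor:main} follows from Theorem~\ref{thm:main}. The plan is to check that $\mu$ is $(\rect,\alpha,\phi)$-tile-standard with $\alpha_\ell=1$ for every $\ell$, and to compute $\phi$ explicitly. Fix $\ell\in[k]$ and write $S_\ell=[x_\ell,x_\ell+\delta_\ell]\times[y_\ell,y_\ell+\delta_\ell]$. The restriction $\mu_\ell$ has density $f_\ell$ on $S_\ell$, so its rescaled permuton has density $\tilde f_\ell(x,y)=\delta_\ell f_\ell(x_\ell+\delta_\ell x,y_\ell+\delta_\ell y)$. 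This is a genuine permuton because $\mu_\ell$ has Lebesgue marginals on its square, which is the sub-permuton definition.

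\textbf{Standardness of each tile.} By top-right continuity of $f_\ell$ at $(x_\ell,y_\ell)$, we get $\tilde f_\ell(tx,ty)\to\delta_\ell f_\ell(x_\ell,y_\ell)$ as $t\to0$. Hence $\tilde f_\ell$ is BL-smooth with $\alpha_-=1$ and the constant function $\phi_-=\delta_\ell f_\ell(x_\ell,y_\ell)$, which is strictly positive. Likewise, bottom-left continuity at the top-right corner gives TR-smoothness with $\alpha_+=1$ and $\phi_+=\delta_\ell f_\ell(x_\ell+\delta_\ell,y_\ell+\delta_\ell)$. Corner-balance follows from strict positivity of $\tilde f_\ell$, with $\rho=1/2$, exactly as in the proof of Corollary~\ref{cor:main}. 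The rescaled permuton is therefore $(1,\delta_\ell c_\ell)$-standard, where
\[
c_\ell=f_\ell(x_\ell,y_\ell)+f_\ell(x_\ell+\delta_\ell,y_\ell+\delta_\ell).
\]
By the convention for sub-permutons, with $\delta_\ell^{\alpha}=\delta_\ell$ when $\alpha=1$, the restriction $\mu_\ell$ is $(1,c_\ell)$-standard. Also $\mu(\bigcup S_\ell)=1$, since the $\mu_\ell$ form a decomposition of $\mu$.

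\textbf{Case $\diag_\rect\neq\emptyset$.} Every $\alpha_\ell$ equals $1$, so $\alpha=1$ and $\phi=\sum_{\ell\in\diag_\rect}c_\ell$ is constant. In this case $\mass_\rect=0$, because a splitting index has an empty inversion sum. Since $\int_\Delta\phi=\phi/2$, Theorem~\ref{thm:tile} gives $n^{-1/2}\minv_n^\mu\dconv\ray[2,\sum_{\ell\in\diag_\rect}c_\ell/2]$, which is the first claim.

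\textbf{Case $\diag_\rect=\emptyset$.} Here $\alpha=\infty$ and $\phi=0$, and $n^{-\alpha/(\alpha+1)}$ should be read as $n^{-1}$. Theorem~\ref{thm:tile} then yields convergence in distribution to $\ray[\infty,\mass_\rect]$, which is the constant $\mass_\rect$. The formula in the statement is just the definition~\eqref{eq:mass}, with $s_m=\sqrt{\lambda_2(S_m)}=\delta_m$ the side lengths. Convergence in distribution to a constant is equivalent to convergence in probability, which gives the final remark.

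The only step needing care is the sub-permuton scaling convention: I must check that the factor $\delta_\ell^\alpha$ in the definition cancels exactly, so that $\phi_\ell=c_\ell$ and not $\delta_\ell c_\ell$. This is a short bookkeeping check against the definition, and it is the point I expect to be the main (mild) obstacle. The rest is routine verification of hypotheses.
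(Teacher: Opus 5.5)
Your proposal is correct and follows essentially the same route as the paper. You show that each rescaled tile is $(1,\delta_\ell c_\ell)$-standard, so that $\mu_\ell$ is $(1,c_\ell)$-standard under the sub-permuton convention; deduce $\alpha=1$ (or $\alpha=\infty$ when $\diag_\rect=\emptyset$) and $\phi=\sum_{\ell\in\diag_\rect}c_\ell$; and then apply Theorem~\ref{thm:tile} with $\int_\Delta\phi=\phi/2$ and $s_m=\delta_m$. You are slightly more explicit than the paper, which only records the combined asymptotic $\tilde f_\ell(tx,ty)+\tilde f_\ell(1-tx,1-ty)\sim\delta_\ell c_\ell$, whereas you check BL-smoothness, TR-smoothness and corner-balance separately.
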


\begin{proof}
    We follow the same strategy as for the proof of Corollary~\ref{cor:main} and show that $\mu$ is $(\rect,1,\phi)$-tile-standard for the appropriate $\phi$.
    We denote by $\phi_\ell=f_\ell(x_\ell,y_\ell)+f_\ell(x_\ell+\delta_\ell,y_\ell+\delta_\ell)$.
    First, the definition of $\rect$ directly tells us that
    \begin{align*}
        \mu\Big(S_1\cup\ldots\cup S_k\Big)
        =1\,.
    \end{align*}
    Second, the linear transformation of $f_\ell$ into the density of a standard permuton is given by $\Tilde{f}_\ell(x,y)=\delta_\ell f_\ell(x_\ell+\delta_\ell x,y_\ell+\delta_\ell y)$ so that
    \begin{align*}
        \Tilde{f}_\ell(tx,ty)+\Tilde{f}_\ell(1-tx,1-ty)
        \sim\delta_\ell\phi_\ell\,,
    \end{align*}
    telling us that $\Tilde{f}_\ell$ is $(1,\delta_\ell\phi_\ell)$-standard so that $f_\ell$ is $(1,\phi_\ell)$-standard.
    Finally, we see that all $\alpha_\ell$ are equal to $1$ so that $\alpha=1$ (or $\alpha=\infty$ when $\diag_\rect$ is empty) and
    \begin{align*}
        \phi
        =\sum_{\ell\in\diag_\rect}\phi_\ell\,.
    \end{align*}
    The result of the corollary is then a direct application of Theorem~\ref{thm:tile}, realizing that the definition of $\mass_\rect$ is correct since $\sqrt{\lambda_2(S_\ell)}=\delta_\ell$.
\end{proof}

The statement of Corollary~\ref{cor:tile} simplifies some of the assumptions from Theorem~\ref{thm:tile} but still applies to a wide range of permutons.
We conclude this section by providing the limit of the minimal inversion when the permuton is obtained from a finite permutation.
For a permutation $\sigma$ of $[k]$, we denote by $\mu_\sigma$ the corresponding \textit{empirical} permuton whose density is given by
\begin{align*}
    f(x,y)
    =\left\{\begin{array}{ll}
        k & \textrm{if $\lceil ky\rceil=\sigma(\lceil kx\rceil)$} \\
        0 & \textrm{otherwise}\,.
    \end{array}\right.
\end{align*}

\begin{corollary}\label{cor:permutation}
    Let $\sigma$ be a permutation of $[k]$ and $\mu=\mu_\sigma$ be the corresponding empirical permuton;
    denote by $\diag_\sigma$ the set of splittings of $\sigma$ (relative to its natural stile decomposition).
    Let $\minv_n^\mu$ be the minimal inversion of $n$ points distributed according to $\mu$, as defined in~\eqref{eq:main}.
    Then, the random variable $\minv_n^\mu$ satisfies one of the following two cases, as $n\rightarrow\infty$.
    \begin{itemize}
        \item If the set of splittings $\diag_\sigma$ is non empty, then
        \begin{align*}
            n^{-1/2}\minv_n^\mu
            \dconv\ray[2,\Big.k\big|\diag_\sigma\big|]
        \end{align*}
        In particular, when $\sigma$ is the identity, we have
        \begin{align*}
            n^{-1/2}\minv_n^\mu
            \dconv\ray[2,\big.k^2]
        \end{align*}
        \item If the set of splittings $\diag_\sigma$ is empty, then
        \begin{align*}
            n^{-1}\minv_n^\mu
            \dconv\frac{1}{k}\min\Big\{\big|\inv_\sigma(\ell)\big|:\ell\in[k]\Big\}\,.
        \end{align*}
        In particular, when $\sigma$ is the anti-identity, we have
        \begin{align*}
            n^{-1}\minv_n^\mu
            \dconv\frac{k-1}{k}\,.
        \end{align*}
    \end{itemize}
    Both convergence occur in distribution, although the second one also occurs in probability.
\end{corollary}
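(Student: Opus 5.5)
The plan is to derive this statement directly from Corollary~\ref{cor:tile}, by identifying the natural tiling of $\mu_\sigma$ and checking that every piece has a constant, strictly positive density. For $\ell\in[k]$, set
\begin{align*}
    S_\ell=\Big[\tfrac{\ell-1}{k},\tfrac{\ell}{k}\Big]\times\Big[\tfrac{\sigma(\ell)-1}{k},\tfrac{\sigma(\ell)}{k}\Big]\,,
\end{align*}
a closed square of side $\delta_\ell=1/k$ with bottom-left corner $(x_\ell,y_\ell)=((\ell-1)/k,(\sigma(\ell)-1)/k)$.

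First I will check that $\rect=\{S_1,\ldots,S_k\}$ is a tiling in the sense of Section~\ref{sec:tile}, and identify its permutation.
\begin{itemize}
    \item The horizontal sides are the intervals $[(\ell-1)/k,\ell/k]$, which cover $[0,1]$.
    \item Since $\sigma$ is a bijection, the vertical sides $[(\sigma(\ell)-1)/k,\sigma(\ell)/k]$ also cover $[0,1]$.
    \item The interiors of the squares are pairwise disjoint.
    \item The indexing already satisfies $S_1\prec_x\cdots\prec_x S_k$.
    \item We have $S_\ell\prec_y S_m$ if and only if $\sigma(\ell)<\sigma(m)$, so by~\eqref{eq:rect perm} the tiling permutation is $\sigma_\rect=\sigma$.
\end{itemize}
Hence $\inv_{\sigma_\rect}=\inv_\sigma$, and $\diag_\rect$ is exactly the set $\diag_\sigma$ of splittings of $\sigma$. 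This is what ``natural tile decomposition'' should mean in the statement.

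Next I will check the measure-theoretic hypotheses of Corollary~\ref{cor:tile}.
\begin{itemize}
    \item By definition of the empirical density, $f=k$ exactly on $\bigcup_\ell S_\ell$ (up to boundaries) and $f=0$ elsewhere, so $\mu(S_1\cup\cdots\cup S_k)=1$.
    \item The restriction $\mu_\ell$ of $\mu$ to $S_\ell$ has constant density $f_\ell\equiv k$. Its marginals have density $k\cdot(1/k)=1$ on each side interval, so it is a sub-permuton with Lebesgue marginals.
    \item The rescaled density is $\tilde f_\ell=\delta_\ell k=1$, which is the uniform permuton.
    \item $f_\ell$ is constant and strictly positive on $S_\ell$, so it is piece-wise continuous and continuous (from the appropriate side) at both diagonal corners.
\end{itemize}
All hypotheses of Corollary~\ref{cor:tile} are therefore satisfied.

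It remains to evaluate the constants in each case.
\begin{itemize}
    \item If $\diag_\sigma\neq\emptyset$, each splitting index contributes
    \begin{align*}
        \frac{f_\ell(x_\ell,y_\ell)+f_\ell(x_\ell+\delta_\ell,y_\ell+\delta_\ell)}{2}=\frac{k+k}{2}=k\,,
    \end{align*}
    so the Rayleigh parameter is $k|\diag_\sigma|$.
    \item For the identity, every $\ell$ has $\inv_\sigma(\ell)=\emptyset$, so $|\diag_\sigma|=k$ and the parameter is $k^2$.
    \item If $\diag_\sigma=\emptyset$, every side length is $s_m=1/k$, so $L_\rect(\ell)=|\inv_\sigma(\ell)|/k$. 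Hence $\mass_\rect=\frac1k\min_\ell|\inv_\sigma(\ell)|$.
    \item For the anti-identity, every $\ell$ has $|\inv_\sigma(\ell)|=k-1$, which gives the limit $(k-1)/k$.
\end{itemize}

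There is no real obstacle here. The only points needing care are the following.
\begin{itemize}
    \item The squares $S_\ell$ meet on their boundaries, so the hypothesis $\mu(S_1\cup\cdots\cup S_k)=1$ must be checked up to these null boundary sets.
    \item The sub-permuton normalisation must be handled consistently: the restriction has total mass $1/k$, not $1$, and the factor $\delta_\ell$ in the rescaling exactly compensates for this.
\end{itemize}
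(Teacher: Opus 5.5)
Your proposal is correct and matches the paper's proof, which is a one-line application of Corollary~\ref{cor:tile} using $\delta_\ell=1/k$ and $f_\ell=k$ for every $\ell\in[k]$. You simply spell out the steps the paper leaves implicit: the natural tiling, the identification $\sigma_\rect=\sigma$ (so that $\diag_\rect=\diag_\sigma$), the hypothesis checks, and the evaluation of the constants.
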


\begin{proof}
    This is a direct application of Corollary~\ref{cor:tile} using that, when $\mu$ is the empirical permuton of some $\sigma$, then $\delta_\ell=1/k$ and $f_\ell=k$ for all $\ell\in[k]$.
\end{proof}

It is interesting to observe that, the limits when $k\rightarrow\infty$ for the identity and anti-identity are coherent with the minimal inversion for the identity and anti-identity permutation.
Indeed, the identity permutation has minimal inversion equal to $1$ and, as $k\rightarrow\infty$, the distribution of $\ray[2,k^2]$ converges to a determnistic measure with probability $1$ on $0$ (so $\minv=o(n^{1/2})$).
For the anti-identity permutation, the minimal inversion equals $n-1$ and we see that, as $k\rightarrow\infty$, the result of Corollary~\ref{cor:permutation} states that $n^{-1}\minv\rightarrow1$, as expected.

Before moving into the background necessary for the different proofs, it is worth discussing the possible fluctuations of $\minv_n^\mu$ when $\diag_\rect$ is empty.
As one can see from the proof of Theorem~\ref{thm:tile} (see Proposition~\ref{prop:non-diag}), the leading term when $\diag_\rect=\emptyset$ arises from the law of large number applied to the number of points falling within each square $S_1,\ldots,S_k$.
With this approach, one might expect that $\minv_n^\mu$ satisfies a standard central limit theorem, arising from the fluctuation in the number of points within each $S_1,\ldots,S_k$.
However, this is only one side of the behaviour of the minimal degree, as there are also inversions created within each square and these follow the distribution stated by Theorem~\ref{thm:main}, thus of order $n^{\alpha_\ell/(\alpha_\ell+1)}$.
This means that it is reasonable to expect the second order fluctuations of $\minv_n^\mu$ to be either distributed as Rayleigh when there exists some $\ell$ which minimizes $\mass_\rect$ and such that $\alpha_\ell>1$ (leading to $\alpha_\ell/(\alpha_\ell+1)>1/2$), as a normal random variable when all such $\alpha_\ell$ are strictly less than $1$, and as a combination of both when $\alpha_\ell=1$.
Since our focus was on standard permutons and sub-linear minimal inversions, we do not provide the detailed distributional limit when $\diag_\rect=\emptyset$ and let these detailed fluctuations as a consequence of the results above.

\subsection{Notations and general assumptions}\label{sec:notation}

Throughout this work, for any $x_n,y_n$, we use the notation $x_n=o(y_n)$ to say that $x_n/y_n\rightarrow0$ and $x_n\sim y_n$ to say that $x_n/y_n\rightarrow1$.
For any sequence $x_n$, we also say that it is slowly varying when $x_{cn}\sim x_n$ for any $c>0$.
From now on, we also drop the superscript $\mu$ when the dependency is clear from context and let $\pts=\pts_n=\{(X_1,Y_1),\ldots,(X_n,Y_n)\}$ be $n$ points sampled according to $\mu$.

Since most of the convergence results occur in distribution, we mostly consider this topology and simply recall that, when the limit is a constant, the convergence in distribution is equivalent to the convergence in probability.
We further mention here that some results likely also occur almost-surely (such as Lemma~\ref{lem:strip bin} or Proposition~\ref{prop:non-diag}).
However, since they are only used to prove distributional limits, we prefer their simpler statements over more complete but also more complex proofs.

For any set $S\subseteq[0,1]^2$, we denote by $S^c=[0,1]^2\setminus S$ its complement with respect to $[0,1]^2$ and $S^{-1}=\{(1-x,1-y):(x,y)\in S\}$ its symmetric set with respect to $y=1-x$.
Given $n$ points sampled according to $\mu$, we further let
\begin{align*}
    \minv_n(S)
    =\min\Big\{\big|\inv_\pts(i)\big|:(X_i,Y_i)\in S\Big\}
\end{align*}
be the minimal inversion restricted to the set $S$.
Finally, for any $0<\epsilon\leq1/2$, we denote by
\begin{align*}
    S_\epsilon
    =\big\{i:0\leq X_i,Y_i\leq\epsilon\big\}
\end{align*}
the set of points within $[0,\epsilon]^2$.
Observe that $S_\epsilon^{-1}$ is simply the set of points within the interval $[1-\epsilon]^2$.

Before moving onto the proofs of the two theorems, we provide a few useful notations.
Given $\alpha>0$, we let $\beta=\alpha/(\alpha+1)$ and $\gamma=\beta/\alpha=1/(\alpha+1)$;
we note that $1-\gamma=\beta$.
Moreover, if we consider a variety of $\alpha$, such as $\alpha_1,\ldots,\alpha_k$, then we denote by $\beta_1,\ldots,\beta_k$ and $\gamma_1,\ldots,\gamma_k$ the corresponding parameters.

\section{BL-smooth permutons}\label{sec:bl}

In this section, we focus on a BL-smooth permuton $\mu$ and on the minimal inversion of the bottom-left nodes, corresponding to $\minv_n(S_\epsilon)$ as defined in Section~\ref{sec:notation}.
We also rely on the definitions of $\beta$ and $\gamma$ from Section~\ref{sec:notation}.

For the rest of this section, we let $\epsilon=\epsilon_n$ be slowly varying in $n$ and converging towards $0$; in particular, this implies that $\epsilon n$ is of larger order than $n^\beta$ for any finite $\alpha>0$.
Thanks to the BL-smoothness of $\mu$, we know that $f(tx,ty)\sim t^{\alpha-1}\phi(x,y)$ with $\phi$ bounded on any compact and strictly positive almost-everywhere.
We denote by $\|\phi\|$ the maximal value of $\phi$ on $[0,1]^2$ and observe that, for any $0\leq a<b$, $0\leq c<d$, $\zeta\geq\max\{b,c\}$, and $\delta=\delta_n=o(\epsilon)$, we have
\begin{align}\label{eq:f int}
    \int_{a\delta}^{b\delta}\int_{c\delta}^{d\delta}f(x,y)dydx
    =(\zeta\delta)^2\int_{a/\zeta}^{b/\zeta}\int_{c/\zeta}^{d/\zeta}f(\delta\zeta x,\delta\zeta y)dydx
    \sim(\zeta\delta)^{\alpha+1}\int_{a/\zeta}^{b/\zeta}\int_{c/\zeta}^{d/\zeta}\phi\,.
\end{align}
This behaviour will be useful when integrating $f$ over intervals of smaller order than $\epsilon$.

The goal of this section is to prove that $\minv_n(\epsilon)$ rescaled by $n^\beta$ converges in distribution to the generalized Rayleigh distribution.
We consider the following sets, extending the notations from Section~\ref{sec:notation}.
For any $c\geq0$, we let $S_c^x=\{i:X_i\leq cn^{-\gamma}\}$ (respectively $S_c^y=\{i:Y_i\leq cn^{-\gamma}\}$) be the set of points in the leftmost (respectively bottommost) band of length $cn^{-\gamma}$.
Then, for any $0\leq a\leq b$, we let
\begin{align*}
    S_{a,b}
    =\big(S_b^x\setminus S_{a-}^x\big)\cap\big(S_b^y\setminus S_{a-}^y\big)
    =\Big\{i:an^{-\gamma}\leq X_i,Y_i\leq bn^{-\gamma}\Big\}
\end{align*}
be the set of points within the interval $[a,b]$ when rescaled by $n^{-\gamma}$.
For the rest of this section, $b$ will be constant but chosen large enough (usually such that $b>t$ for some well-defined $t$), $a=a_n$ will be slowly varying and converging to $0$, so that $cn^\beta$ diverges to infinity for any $c\in[a,b]$.
We provide in Figure~\ref{fig:corner} a representation of the different sets, to better understand their role and relationships in terms of inversions.

\begin{figure}[htb]
    \centering
    \includegraphics[width=0.8\textwidth]{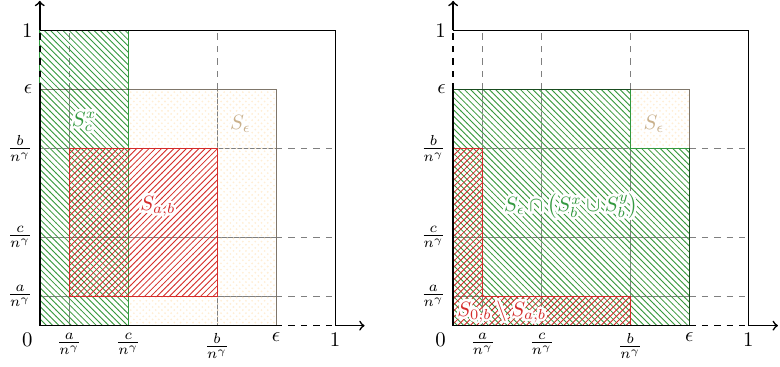}
    \caption{
    A representation of the different sets introduced in Section~\ref{sec:notation} and in Section~\ref{sec:bl}, along some sets used in Section~\ref{sec:binom}.
    On the left figure:
    the set in light brown is $S_\epsilon$ and correspond to points whose coordinates belong to $[0,\epsilon]^2$;
    the set in green is $S_c^x$ and corresponds to points whose $x$ coordinates are less than $cn^{-\gamma}$; and
    the set in red is $S_{a,b}$ and corresponds to points whose coordinates are between $an^{-\gamma}$ and $bn^{-\gamma}$.
    On the right figure:
    the set in red is used in Lemma~\ref{lem:corner bin} and is empty with high probability; and
    the set in green is used in Lemma~\ref{lem:L bin} and will contain $o(n^\beta)=o(n^{1-\gamma})$ points.
    }
    \label{fig:corner}
\end{figure}

The rest of the section is organized as follows.
In Section~\ref{sec:binom}, we provide some asymptotic results regarding the size of the different sets introduced above.
In Section~\ref{sec:useful}, we prove a key result regarding points of $S_{a,b}$, which is at the heart of why the limit in Theorem~\ref{thm:main} is a generalized Rayleigh.
Finally, in Section~\ref{sec:corner inv}, we combine the results from the previous two sections in order to prove our main result, Proposition~\ref{prop:corner inv}.

\subsection{Behaviour of the binomials}\label{sec:binom}

In this section, we provide results regarding the size of the different sets $S_{a,b}$, $S_c^x$, and $S_c^y$ as introduced in Section~\ref{sec:bl}.

\begin{lemma}\label{lem:corner bin}
    Using the notations and assumptions from Section~\ref{sec:bl}, we have that
    \begin{align*}
        \P[\big]{S_{0,b}\neq S_{a,b}}
        \longrightarrow0\,.
    \end{align*}
\end{lemma}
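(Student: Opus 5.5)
The event $\{S_{0,b}\neq S_{a,b}\}$ means that some sample point has both coordinates at most $bn^{-\gamma}$ and at least one coordinate below $an^{-\gamma}$. So it is contained in the event that at least one of the $n$ points falls in
\begin{align*}
    R_n=\Big([0,an^{-\gamma})\times[0,bn^{-\gamma}]\Big)\cup\Big([0,bn^{-\gamma}]\times[0,an^{-\gamma})\Big)\,.
\end{align*}
The plan is a first moment (union) bound:
\begin{align*}
    \P[\big]{S_{0,b}\neq S_{a,b}}\leq n\,\mu(R_n)\,.
\end{align*}
It therefore suffices to show $n\mu(R_n)\to0$.

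Since $bn^{-\gamma}=o(\epsilon)$ (here $\epsilon$ is slowly varying), the set $R_n$ lies inside $[0,\epsilon]^2$ for $n$ large, where $\mu$ has density $f$. I would use the scaling relation~\eqref{eq:f int} with $\delta=n^{-\gamma}$ and $\zeta=b$. By symmetry it is enough to treat the first rectangle:
\begin{align*}
    n\int_0^{an^{-\gamma}}\int_0^{bn^{-\gamma}}f
    \approx n\,(bn^{-\gamma})^{\alpha+1}\int_0^{a/b}\int_0^1\phi
    =b^{\alpha+1}\int_0^{a/b}\int_0^1\phi\,,
\end{align*}
where $\gamma(\alpha+1)=1$ makes the powers of $n$ cancel exactly. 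Since $\phi\leq\|\phi\|$ on $[0,1]^2$, the last integral is at most $\|\phi\|a/b$, which tends to $0$ because $a=a_n\to0$.

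The main obstacle is that~\eqref{eq:f int} is stated for fixed limits, whereas here the upper limit $a_n/b$ moves with $n$. So the asymptotic equivalence cannot be quoted verbatim. I would handle this by monotonicity: for any fixed $\eta>0$ and $n$ large, $a_n<\eta$, so the integral is bounded by the one over $[0,\eta n^{-\gamma}]\times[0,bn^{-\gamma}]$. Applying~\eqref{eq:f int} with these fixed limits gives
\begin{align*}
    \limsup_{n\to\infty} n\mu(R_n)\leq 2b^{\alpha}\|\phi\|\eta\,.
\end{align*}
Letting $\eta\to0$ concludes. The only remaining check is that the convergence in~\eqref{eq:f int} is legitimate. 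It follows from $f(tx,ty)\sim t^{\alpha-1}\phi(x,y)$ with dominated convergence, which I take as given from the paper's derivation of~\eqref{eq:f int}.
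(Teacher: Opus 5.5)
Your argument is correct and is essentially the paper's proof: a first-moment (Markov/union) bound $n\mu(R_n)$ on the L-shaped region $S_{0,b}\setminus S_{a,b}$, evaluated via \eqref{eq:f int} with $\zeta=b$ and bounded using $\phi\leq\|\phi\|$ to get a quantity of order $ab^{\alpha}\|\phi\|$ that vanishes. Your extra step of replacing $a_n$ by a fixed $\eta$ via monotonicity before invoking \eqref{eq:f int} is a small but genuine tightening, since the paper applies \eqref{eq:f int} directly with the moving limit $a/b$.
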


\begin{proof}
    Since $S_{a,b}\subseteq S_{0,b}$, we let $B_1=|S_{0,b}\setminus S_{a,b}|$ so that $S_{0,b}\neq S_{a,b}$ if and only if $B_1\neq0$.
    We also refer to Figure~\ref{fig:corner} for a representation of $B_1$.
    We then observe that $B_1$ is a binomial random variable of size $n$ and with parameter
    \begin{align*}
        p_1
        &=\int_0^{an^{-\gamma}}\int_0^{bn^{-\gamma}}f(x,y)dydx+\int_{an^{-\gamma}}^{bn^{-\gamma}}\int_0^{an^{-\gamma}}f(x,y)dydx\\
        &=\big(1+o(1)\big)\frac{b^{\alpha+1}}{n}\left[\int_0^{a/b}\int_0^1\phi(x,y)dydx+\int_{a/b}^1\int_0^{a/b}\phi(x,y)dydx\right]\,,
    \end{align*}
    where we used~\eqref{eq:f int} for the second equality.
    Further using that $\phi\leq\|\phi\|<\infty$ on $[0,1]^2$, this bound transforms into
    \begin{align*}
        p_1
        \leq\big(1+o(1)\big)\frac{b^{\alpha+1}}{n}\left[\frac{a}{b}+\frac{a}{b}\left(1-\frac{a}{b}\right)\right]\|\phi\|
        \leq\big(1+o(1)\big)\frac{2ab^\alpha\|\phi\|}{n}\,.
    \end{align*}
    The result then follows from Markov's inequality since $\P{B_1\neq0}=\P{B_1\geq1}\leq\E{B_1}=np_1$ and by recalling that $b$ is finite and $a\rightarrow0$.
\end{proof}

\begin{lemma}\label{lem:L bin}
    Using the notations and assumptions from Section~\ref{sec:bl}, for any $t>0$, we have that
    \begin{align*}
        \P*{\Big|S_\epsilon\cap\big(S_b^x\cup S_b^y\big)\Big|\geq tn^\beta}
        \longrightarrow0\,.
    \end{align*}
\end{lemma}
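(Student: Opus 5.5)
The count $B_2=|S_\epsilon\cap(S_b^x\cup S_b^y)|$ is a binomial random variable with $n$ trials. So I will bound its success probability $p_2$, show that $np_2=o(n^\beta)$, and conclude with Markov's inequality:
\begin{align*}
    \P{B_2\geq tn^\beta}\leq \frac{np_2}{tn^\beta}\longrightarrow0\,.
\end{align*}

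By the union bound and the symmetric roles of $x$ and $y$, it suffices to treat the vertical strip $[0,bn^{-\gamma}]\times[0,\epsilon]$. The horizontal strip $[0,\epsilon]\times[0,bn^{-\gamma}]$ is handled identically, since the hypotheses on $\phi$ are symmetric in the variables. Writing $\delta=bn^{-\gamma}$, the probability that one point falls in the vertical strip is
\begin{align*}
    p=\int_0^{\delta}\int_0^{\epsilon}f(x,y)\,dy\,dx\,.
\end{align*}

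I would not apply~\eqref{eq:f int} directly, because the domain has aspect ratio $\epsilon/\delta\to\infty$. Instead I will use the permuton marginal constraint. The $x$-marginal of $\mu$ is uniform, so the full strip $[0,\delta]\times[0,1]$ has mass exactly $\delta$, and therefore $p\leq\delta=bn^{-\gamma}$. This gives $np\leq bn^{1-\gamma}=bn^\beta$, which has the right order but not $o(n^\beta)$.

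The main obstacle is gaining the extra $o(1)$ factor, and I would get it from the homogeneity of $f$. Split the strip at height $K\delta$ for a large constant $K$. On $[0,\delta]\times[0,K\delta]$, \eqref{eq:f int} applies with $\zeta=K$ (valid since $K\delta=o(\epsilon)$) and gives mass $\sim(K\delta)^{\alpha+1}\int_0^{1/K}\int_0^1\phi\leq (K\delta)^{\alpha+1}\|\phi\|/K$. Since $\delta^{\alpha+1}=b^{\alpha+1}/n$, this contributes $O(K^\alpha)$ points in expectation, which is $o(n^\beta)$.

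The remaining part $[0,\delta]\times[K\delta,\epsilon]$ has mass $\int_0^\delta\int_{K\delta}^\epsilon f$. In scaled coordinates $x=\delta u$, $y=\delta v$, with $v$ ranging over $[K,\epsilon/\delta]$, heuristically $f(\delta u,\delta v)\approx \delta^{\alpha-1}v^{\alpha-1}\phi(u/v,1)$. The resulting mass is $\delta^{\alpha+1}\int_K^{\epsilon/\delta}v^{\alpha-1}\int_0^1\phi(u/v,1)\,du\,dv$. Because $\phi(\cdot,1)$ is bounded near $0$, this is at most $\delta^{\alpha+1}\|\phi\|(\epsilon/\delta)^\alpha/\alpha=O(\delta\epsilon^\alpha)$, giving $n\,O(\delta\epsilon^\alpha)=O(\epsilon^\alpha n^\beta)=o(n^\beta)$ since $\epsilon\to0$.

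Making this rigorous requires the asymptotic $f(tx,ty)\sim t^{\alpha-1}\phi$ to hold uniformly on the region $\{(x,y)\in[0,\epsilon]^2\}$. I would justify this by taking $\epsilon$ slowly varying and small enough, diagonalizing over the convergence in the BL-smoothness assumption, in the same way~\eqref{eq:f int} is used with $\epsilon_n\to0$. This uniformity is the delicate step. If it proves problematic, the fallback is to choose $\epsilon_n\to0$ slowly enough that the ratio $f(tx,ty)/(t^{\alpha-1}\phi(x,y))$ is bounded on $[0,\epsilon_n]^2$, and this suffices for the $O(\epsilon^\alpha n^\beta)$ bound.
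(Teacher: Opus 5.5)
Your proof is correct at the paper's level of rigor and has the same skeleton as the paper's (binomial count, Markov's inequality, and a homogeneity estimate giving $np_2=O(\epsilon^\alpha n^\beta)$). The two differ only in how $p_2$ is estimated.

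The paper rescales by $\epsilon$ rather than by $\delta=bn^{-\gamma}$. It replaces $f(x,y)$ by $\epsilon^{\alpha-1}\phi(x/\epsilon,y/\epsilon)$ on $[0,\epsilon]^2$, so each strip becomes $[0,\delta/\epsilon]\times[0,1]$ inside the unit square. Bounding $\phi$ there by $\|\phi\|$ gives $p_2\le(2+o(1))b\epsilon^\alpha n^{-\gamma}\|\phi\|$ in one line. The aspect-ratio obstacle you describe is therefore an artefact of rescaling at scale $\delta$.

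Your estimate on $[0,\delta]\times[K\delta,\epsilon]$ is the same approximation in disguise, since $\delta^{\alpha-1}v^{\alpha-1}\phi(u/v,1)=\phi(\delta u,\delta v)$ by homogeneity. The difference is that you bound $\phi(x,y)$ by $y^{\alpha-1}\sup_{s\le 1/K}\phi(s,1)$ instead of $\epsilon^{\alpha-1}\sup_{[0,1]^2}\phi$. Your choice is the more robust one. For $\alpha<1$, the identity $\phi(tx,ty)=t^{\alpha-1}\phi(x,y)$ forces $\phi$ to blow up at the origin (as in the paper's own example $(x+y)^{\alpha-1}$). Then $\sup_{[0,1]^2}\phi=\infty$ and the one-line bound is empty. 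Your split only needs $\phi$ bounded on a compact set away from the origin, plus integrability of $y^{\alpha-1}$. For the same reason, in your bottom square replace $\|\phi\|/K$ by $\int_{[0,1/K]\times[0,1]}\phi$, which is finite for every $\alpha>0$; all you need there is $O(1)$ expected points.

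The one thing to correct is your treatment of uniformity. Both arguments (and already \eqref{eq:f int}) need $f/\phi$ to be bounded, or to tend to $1$, uniformly near the origin; the paper uses this tacitly. Your suggested derivations do not produce it:
\begin{itemize}
\item Boundedness of $f/\phi$ on $[0,\epsilon]^2$ only gets easier as $\epsilon$ shrinks, so the speed of $\epsilon_n$ plays no role.
\item Diagonalizing the ray-wise limits gives no uniformity over the continuum of directions.
\end{itemize}
Ray-wise convergence is in fact strictly weaker. Take $f=\tfrac12+f_2$ near the origin, where $f_2$ is carried by the band $\{y^2/2\le x\le 2y^2\}$ (tangent to the $y$-axis) and has $x$-marginal density $\tfrac12$. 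Every ray $t\mapsto(tx,ty)$ is eventually outside this band, so BL-smoothness holds with $\alpha=1$ and $\phi\equiv\tfrac12$. Yet $f_2$ alone puts mass $\delta/2$ in $[0,\delta]\times[0,\epsilon]$ as soon as $2\delta\le\epsilon^2$. Hence $B_2\ge bn^\beta/4$ with high probability, and the lemma fails. So a uniform (or dominated) form of BL-smoothness has to be taken as a hypothesis rather than derived.
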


\begin{proof}
    We refer again to Figure~\ref{fig:corner} and let $B_2=|S_\epsilon\cap(S_b^x\cup S_b^y)|$.
    Observe that $B_2$ is again a binomial random variable of size $n$ and with parameter
    \begin{align*}
        p_2
        =\int_0^{bn^{-\gamma}}\int_0^\epsilon f(x,y)dydx+\int_{bn^{-\gamma}}^\epsilon\int_0^{bn^{-\gamma}}f(x,y)dydx\,.
    \end{align*}
    Now, using that $f$ can be approximated by $\phi$ and observing that $x/\epsilon,y/\epsilon\leq1$ in the considered range, we can approximate $p_2$ with
    \begin{align*}
        p_2
        =\big(1+o(1)\big)\epsilon^{\alpha-1}\left[\int_0^{bn^{-\gamma}}\int_0^\epsilon\phi(x/\epsilon,y/\epsilon)dydx+\int_{bn^{-\gamma}}^\epsilon\int_0^{bn^{-\gamma}}\phi(x/\epsilon,y/\epsilon)dydx\right]\,.
    \end{align*}
    Finally, using that $\phi\leq\|\phi\|<\infty$ on $[0,1]^2$, this eventually transforms into
    \begin{align*}
        p_2
        \leq\big(1+o(1)\big)\epsilon^{\alpha-1}\left[\frac{b\epsilon}{n^{1/(\alpha+1)}}+\left(\epsilon-\frac{b}{n^\beta}\right)\frac{b}{n^\beta}\right]\|\phi\|
        \leq\big(2+o(1)\big)\frac{b\epsilon^\alpha}{n^\gamma}\|\phi\|\,.
    \end{align*}
    The desired result then follows from Markov's inequality, since $1-\gamma=\beta$, $b$ is fixed, and $\epsilon\rightarrow0$.
\end{proof}

\begin{lemma}\label{lem:central bin}
    Using the notations and assumptions from Section~\ref{sec:bl}, we have that
    \begin{align*}
        \big|S_{a,b}\big|
        \dconv\poisson[\int_{[0,b]^2}\phi]\,,
    \end{align*}
    where the convergence occurs in distribution.
\end{lemma}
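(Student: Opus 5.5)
We argue as in Lemma~\ref{lem:corner bin} and Lemma~\ref{lem:L bin}: $|S_{a,b}|$ is a binomial random variable, and we apply the classical Poisson limit theorem (law of small numbers). Since the $n$ points are i.i.d.\ according to $\mu$, $|S_{a,b}|$ is binomial with size $n$ and parameter
\begin{align*}
    p_{a,b}
    =\int_{an^{-\gamma}}^{bn^{-\gamma}}\int_{an^{-\gamma}}^{bn^{-\gamma}}f(x,y)dydx\,.
\end{align*}
It therefore suffices to show that $np_{a,b}\rightarrow\int_{[0,b]^2}\phi$. The binomial-to-Poisson convergence then holds, for instance by convergence of the probability generating functions $(1-p_{a,b}+p_{a,b}z)^n\rightarrow\exp\big((z-1)\lambda\big)$ when $p_{a,b}\rightarrow0$ and $np_{a,b}\rightarrow\lambda$.

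To compute $np_{a,b}$, we apply~\eqref{eq:f int} with $\delta=n^{-\gamma}$, which is $o(\epsilon)$ since $\epsilon$ is slowly varying, and with $\zeta=b$. Since $\delta^{\alpha+1}=n^{-\gamma(\alpha+1)}=n^{-1}$, this gives
\begin{align*}
    np_{a,b}
    =\big(1+o(1)\big)b^{\alpha+1}\int_{a/b}^{1}\int_{a/b}^{1}\phi\,.
\end{align*}
As $a\rightarrow0$, and since $\phi$ is bounded on $[0,1]^2$, the integral converges to $\int_{[0,1]^2}\phi$. Indeed, the missing region has area at most $2a/b$, so the difference is bounded by $2(a/b)\|\phi\|$; this is the same estimate as in Lemma~\ref{lem:corner bin}. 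Finally, the homogeneity $\phi(tx,ty)=t^{\alpha-1}\phi(x,y)$ and the change of variables $(x,y)\mapsto(bx,by)$ give $b^{\alpha+1}\int_{[0,1]^2}\phi=\int_{[0,b]^2}\phi$. Moreover $p_{a,b}=O(1/n)\rightarrow0$, so the Poisson limit follows.

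The main subtlety is the use of~\eqref{eq:f int} when the lower bound $a=a_n$ varies with $n$. The asymptotic $f(tx,ty)\sim t^{\alpha-1}\phi(x,y)$ is pointwise, so passing it under the integral needs a uniformity or dominated-convergence argument. This is the same step implicitly used in Lemma~\ref{lem:corner bin}. To be safe, we sandwich: $S_{a,b}\subseteq S_{0,b}$, and by Lemma~\ref{lem:corner bin} the two sets coincide with probability tending to one. It therefore suffices to prove the Poisson limit for $|S_{0,b}|$, whose parameter involves the fixed domain $[0,b]^2$, so~\eqref{eq:f int} applies directly with $a=0$.
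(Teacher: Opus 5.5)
Your proof is correct and follows the paper's argument: $|S_{a,b}|$ is binomial, \eqref{eq:f int} with $\delta=n^{-\gamma}$ and $\zeta=b$ gives $np_{a,b}\sim b^{\alpha+1}\int_{[a/b,1]^2}\phi\rightarrow\int_{[0,b]^2}\phi$ by boundedness and homogeneity of $\phi$, and the binomial-to-Poisson limit concludes. Your extra reduction to $S_{0,b}$ is valid but optional, and it only moves the varying-$a$ concern into Lemma~\ref{lem:corner bin}, whose proof uses the same step. A more direct fix is monotonicity: for fixed $a_0>0$ and $n$ large enough that $a_n\leq a_0$, you have $p_{a_0,b}\leq p_{a,b}\leq p_{0,b}$, and then you let $a_0\rightarrow0$.
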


\begin{proof}
    Again, use Figure~\ref{fig:corner} for a visual-aid and let $B_3=|S_{a,b}|$, which is a binomial random variable of size $n$ and with parameter
    \begin{align*}
        p_3
        =\int_{an^{-\gamma}}^{bn^{-\gamma}}\int_{an^{-\gamma}}^{bn^{-\gamma}}f(x,y)dydx
        \sim\frac{b^{\alpha+1}}{n}\int_{a/b}^1\int_{a/b}^1\phi(x,y)dydx\,,
    \end{align*}
    where we used~\eqref{eq:f int} for the last equality.
    We further recall that $\phi(bx,by)=b^{\alpha-1}\phi(x,y)$ and that $a\rightarrow0$, so that $np_3\sim\int_{[0,b]^2}\phi$.
    It is then a known result that binomial random variables with parameters $n$ and $c/n$ converge in distribution to a Poisson random variable with parameter $c$.
\end{proof}

\begin{lemma}\label{lem:strip bin}
    Using the notations and assumptions from Section~\ref{sec:bl}, we have that
    \begin{align*}
        \frac{1}{n^\beta}\left(\frac{\big|S_{c_1}^x\big|}{c_1},\ldots,\frac{\big|S_{c_k}^x\big|}{c_k},\frac{\big|S_{c'_1}^y\big|}{c'_1},\ldots,\frac{\big|S_{c'_\ell}^y\big|}{c'_\ell}\right)
        \dconv(1,\ldots,1)\,,
    \end{align*}
    for any $a\leq c_1,\ldots,c_k\leq b$ and any $a\leq c'_1,\ldots,c'_\ell\leq b$ possibly depending on $n$, where the convergence occurs in distribution (and equivalently in probability).
\end{lemma}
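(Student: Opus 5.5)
The plan rests on one observation: the strips $S_c^x$ and $S_c^y$ are defined only through one coordinate, so their sizes depend only on the marginals of $\mu$, which are uniform. The smoothness of $\mu$ near the corner will not be needed at all.

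First, for any (possibly $n$-dependent) $c\in[a,b]$, the variable $|S_c^x|$ is a binomial random variable of size $n$ with parameter
\begin{align*}
    \mu\big([0,cn^{-\gamma}]\times[0,1]\big)
    =cn^{-\gamma}\,,
\end{align*}
because $\mu$ is a permuton. This parameter is at most $1$ for $n$ large, since $b$ is fixed. Hence
\begin{align*}
    \E[\big]{|S_c^x|}=cn^{1-\gamma}=cn^\beta
    \qquad\textrm{and}\qquad
    \mathrm{Var}\big(|S_c^x|\big)\leq cn^\beta\,,
\end{align*}
and exactly the same holds for $|S_c^y|$ by the uniformity of the second marginal.

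Second, I apply Chebyshev's inequality to the normalized quantity $|S_c^x|/(cn^\beta)$. For any $\eta>0$,
\begin{align*}
    \P*{\left|\frac{|S_c^x|}{cn^\beta}-1\right|>\eta}
    \leq\frac{cn^\beta}{\eta^2c^2n^{2\beta}}
    =\frac{1}{\eta^2cn^\beta}
    \leq\frac{1}{\eta^2an^\beta}\,.
\end{align*}
The right-hand side tends to $0$ by the standing assumption of Section~\ref{sec:bl} that $an^\beta\rightarrow\infty$ ($a$ is slowly varying). Note that this bound is uniform over $c\in[a,b]$, which is exactly what allows the $c_i$ and $c'_j$ to depend on $n$. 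The same bound holds for $|S_{c'}^y|$.

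Finally, since the vector has a fixed finite number $k+\ell$ of coordinates, a union bound shows that all coordinates are simultaneously within $\eta$ of $1$ with probability tending to one. This is convergence in probability of the vector to $(1,\ldots,1)$, and hence convergence in distribution. No dependence between coordinates needs to be controlled.

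The only point requiring care is the lower end of the range: the variance bound degrades as $c\downarrow a$. It is handled precisely by the assumption that $an^\beta$ diverges, so I do not expect a genuine obstacle here.
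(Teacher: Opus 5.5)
Your proposal is correct and follows the paper's proof. The uniform marginals make each $|S_c^x|$ and $|S_c^y|$ binomial with parameter $cn^{-\gamma}$ and mean at least $an^\beta\to\infty$, so each normalized coordinate concentrates at $1$, and finitely many coordinates are then combined. Your explicit Chebyshev bound $1/(\eta^2an^\beta)$, uniform over $c\in[a,b]$, simply spells out the concentration step the paper leaves implicit.
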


\begin{proof}
    For any $c\in[a,b]$, possibly depending on $n$, using the uniform marginals of a permuton, we observe that $|S_c^x|$ and $|S_c^y|$ are both binomial random variables of size $n$ and with parameter $p_c=cn^{-\gamma}\geq an^{-\gamma}$.
    Recalling that $1-\gamma=\beta>0$ and $a=a_n$ is slowly varying, we see that $np_c\geq an^\beta\rightarrow\infty$, which implies that $|S_c^x|/(np_c)$ and $|S_c^y|/(np_c)$ both converge in probability to $1$.
    By then combining finitely many such events, they all asymptotically hold and the convergence of the lemma follows.
\end{proof}

\subsection{A useful distribution}\label{sec:useful}

After studying basic properties of the sets from Section~\ref{sec:bl}, we now study the distributional limit of a particular combination of random variables, which will prove to be key in the minimal inversion on $[0,\epsilon]^2$.

\begin{proposition}\label{prop:useful-bl}
    Using the notations and assumptions from Section~\ref{sec:bl} and by letting $\Delta$ be the unit square triangle, for any $0<t<b$, we have that
    \begin{align*}
        \P*{\min\Big\{\big|S_{n^\gamma X_i}^x\big|+\big|S_{n^\gamma Y_i}^y\big|:i\in S_{a,b}\Big\}>tn^\beta}
        \longrightarrow\e*{-t^{\alpha+1}\int_\Delta\phi}\,.
    \end{align*}
\end{proposition}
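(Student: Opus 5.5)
The plan is to reduce the random quantity $|S^x_{n^\gamma X_i}|+|S^y_{n^\gamma Y_i}|$ to a deterministic function of the rescaled coordinates, and then compute a Poisson void probability. Write $U_i=n^\gamma X_i$ and $V_i=n^\gamma Y_i$ for $i\in S_{a,b}$, so that $(U_i,V_i)\in[a,b]^2$.

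\textbf{Step 1 (linearization).} By Lemma~\ref{lem:strip bin}, $n^{-\beta}|S^x_c|/c\to1$ in probability, where $c$ is allowed to depend on $n$. We want this uniformly over $c\in[a,b]$, which we obtain from a finite mesh together with monotonicity. Fix $\eta>0$ and take the grid $c_j=a+j\eta$. Monotonicity of $c\mapsto|S^x_c|$ sandwiches every $c$ between grid values, so with probability tending to one
\begin{align*}
\sup_{c\in[a,b]}\Big|n^{-\beta}|S^x_c|-c\Big|\leq 2\eta ,
\end{align*}
and the same bound holds for $S^y$. Consequently, with high probability, every $i\in S_{a,b}$ satisfies
\begin{align*}
\Big|n^{-\beta}\big(|S^x_{U_i}|+|S^y_{V_i}|\big)-(U_i+V_i)\Big|\leq4\eta .
\end{align*}
There is one subtlety: $S^x_{U_i}$ contains the point $i$ itself. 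This changes the count only by $O(1)=o(n^\beta)$ and is harmless.

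\textbf{Step 2 (Poisson process limit).} Consider the point process $\Pi_n=\sum_{i\in S_{a,b}}\delta_{(U_i,V_i)}$ on $[0,b]^2$. For any rectangle $R\subseteq[0,b]^2$ we aim to show $\Pi_n(R)\dconv\poisson[\int_R\phi]$. The argument is the same binomial computation as in Lemma~\ref{lem:central bin}, using~\eqref{eq:f int} and the fact that $a\to0$. Joint convergence on disjoint rectangles follows from the multinomial-to-independent-Poisson limit. In fact we only need void probabilities of a single set, namely
\begin{align*}
A_s=\{(u,v)\in[0,b]^2:u+v\leq s\}=s\Delta ,
\end{align*}
which lies in $[0,b]^2$ since $s\leq t<b$ (up to a small $\eta$ margin). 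The quantity $n\,\mu(n^{-\gamma}A_s\cap[an^{-\gamma},bn^{-\gamma}]^2)$ converges to $\int_{s\Delta}\phi$. By the homogeneity $\phi(sx,sy)=s^{\alpha-1}\phi(x,y)$ and the change of variables, this equals
\begin{align*}
s^{\alpha+1}\int_\Delta\phi .
\end{align*}
Hence
\begin{align*}
\P{\Pi_n(A_s)=0}=\big(1-p_n\big)^n\to\e*{-s^{\alpha+1}\int_\Delta\phi}.
\end{align*}

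\textbf{Step 3 (sandwich).} On the high-probability event of Step~1, the event in the proposition is sandwiched between two void events:
\begin{align*}
\{\Pi_n(A_{t+4\eta})=0\}\subseteq\Big\{\min_i\cdots>tn^\beta\Big\}\subseteq\{\Pi_n(A_{t-4\eta})=0\}.
\end{align*}
Taking $\liminf$ and $\limsup$, then letting $\eta\to0$ and using continuity of $s\mapsto\e*{-s^{\alpha+1}\int_\Delta\phi}$, gives the claim. Boundary effects are negligible: $\phi$ is bounded, so the line $u+v=s$ carries no mass.

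\textbf{Main obstacle.} The delicate point is Step~1: the uniformity in $c$ is needed because the $U_i$ are random and correlated with the strip counts. The monotone-grid argument handles this, since it requires only finitely many applications of Lemma~\ref{lem:strip bin}. The lower end $c$ near $a\to0$ is fine because there the error bound $|n^{-\beta}|S^x_c|-c|$ is controlled by the grid point $c_0=a$, where both terms are small.
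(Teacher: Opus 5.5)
Your proof is correct and follows the same route as the paper: linearize $|S^x_{n^\gamma X_i}|+|S^y_{n^\gamma Y_i}|\approx n^\beta(n^\gamma X_i+n^\gamma Y_i)$ via Lemma~\ref{lem:strip bin}, then compute the probability that no point of $S_{a,b}$ falls below the line $x+y=tn^{-\gamma}$. Your monotone-grid uniformity and $\eta$-sandwich make rigorous the step where the paper conditions on the points of $S_{a,b}$ and replaces the conditional probability by $\prod_{i\in S_{a,b}}\I{X_i+Y_i>tn^{-\gamma}}$; your direct void computation $(1-p_n)^n$ is the same quantity the paper obtains by averaging $(1-q_n)^{|S_{a,b}|}$ against the Poisson limit from Lemma~\ref{lem:central bin}.
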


\begin{proof}
    Fix $t$ such that $0<t<b$ and write $Z$ for the random minimum inside the probability.
    Condition the target probability given $S_{a,b}$ and the coordinates of the corresponding points to see that
    \begin{align*}
        \P[\big]{Z>tn^\beta}
        =\E*{\P*{Z>tn^\beta~\Big|~S_{a,b},(X_i,Y_i)_{i\in S_{a,b}}}}
    \end{align*}
    Now thanks to Lemma~\ref{lem:strip bin}, conditionally given the (finite) set $S_{a,b}$ and the positions $(X_i,Y_i)_{i\in S_{a,b}}$, we know that the inner probability satisfies
    \begin{align*}
        \P*{Z>tn^\beta~\Big|~S_{a,b},(X_i,Y_i)_{i\in S_{a,b}}}
        &\sim\P*{\min\Big\{n^\gamma X_i+n^\gamma Y_i:i\in S_{a,b}\Big\}>t~\bigg|~S_{a,b},(X_i,Y_i)_{i\in S_{a,b}}}\\
        &\sim\prod_{i\in S_{a,b}}\I{X_i+Y_i>tn^{-\gamma}}\,.
    \end{align*}
    By dominated convergence, since Lemma~\ref{lem:central bin} tells us that $|S_{a,b}|$ is almost-surely bounded as $n\rightarrow\infty$, and since the distribution of the points in $S_{a,b}$ rely on $f$, we see that, as $n\rightarrow\infty$
    \begin{align*}
        \P[\big]{Z>tn^\beta}
        \sim\E*{\prod_{i\in S_{a,b}}\I{X_i+Y_i>tn^{-\gamma}}}\,.
    \end{align*}
    Now, once we condition on the size $|S_{a,b}|$, the corresponding points in the sets are independently distributed on the square $[an^{-\gamma},bn^{-\gamma}]^2$ according to $f$.
    Recalling that $a\rightarrow0$ and that $b>t$, we see that, for $n$ large enough, in order to have $(x,y)\in(a,b)^2$ such that $x+y<t$, we need to have $a<x<t-a<b$ and then $a<y<t-x<b$.
    It follows that a single point $i\in S_{a,b}$ chosen arbitrarily satisfies
    \begin{align*}
        \P[\Big]{X_i+Y_i>tn^{-\gamma}}
        &=1-\frac{\int_{an^{-\gamma}}^{(t-a)n^{-\gamma}}\int_{an^{-\gamma}}^{tn^{-\gamma}-x}f(x,y)dydx}{\int_{an^{-\gamma}}^{bn^{-\gamma}}\int_{an^{-\gamma}}^{bn^{-\gamma}}f(x,y)dydx}\\
        &=1-\big(1+o(1)\big)\frac{t^{\alpha+1}\int_{a/t}^{(t-a)/t}\int_{a/t}^{1-x}\phi(x,y)dydx}{b^{\alpha+1}\int_{a/b}^1\int_{a/b}^1\phi(x,y)dydx}\,.
    \end{align*}
    Recalling that $\phi(bx,by)=b^{\alpha-1}\phi(x,y)$ and that $a\rightarrow0$, we see that
    \begin{align*}
        \P[\Big]{X_i+Y_i>tn^{1/(\alpha+1)}}
        =1-\big(1+o(1)\big)t^{\alpha+1}\frac{\int_\Delta\phi}{\int_{[0,b]^2}\phi}\,.
    \end{align*}
    Combining the previous results and using again the dominated convergence theorem, we see that
    \begin{align*}
        \P[\big]{Z>tn^\beta}
        \sim\E*{\left(1-\big(1+o(1)\big)t^{\alpha+1}\frac{\int_\Delta\phi}{\int_{[0,b]^2}\phi}\right)^{|S_{a,b}|}}\,.
    \end{align*}
    To conclude the proof, observe that $b$ is fixed so that the term inside the innermost brackets converges to a constant.
    Combining this with Lemma~\ref{lem:central bin}, which tells us that $|S_{a,b}|$ converges to a Poisson random variable with parameter $\int_{[0,b]^2}\phi$, the previous asymptotic behaviour becomes
    \begin{align*}
        \P[\big]{Z>tn^\beta}
        \sim\e*{-\big(1+o(1)\big)t^{\alpha+1}\int_\Delta\phi}\,.
    \end{align*}
    Finally, using that the terms inside the exponential are finite, the desired convergence follows.
\end{proof}

\subsection{Minimal inversion in the corner}\label{sec:corner inv}

We conclude Section~\ref{sec:bl} by proving the following proposition, which proves the convergence in distribution of the minimal inversion in the corner $[0,\epsilon]^2$.

\begin{proposition}\label{prop:corner inv}
    Using the notations from Section~\ref{sec:notation} and Section~\ref{sec:bl}, we have that
    \begin{align*}
        n^{-\beta}\minv_n(S_\epsilon)
        \dconv\ray[\alpha+1,\int_\Delta\phi]\,,
    \end{align*}
    where $\Delta$ is the unit square triangle and the convergence occurs in distribution.
\end{proposition}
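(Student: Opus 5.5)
We fix $t>0$, choose a constant $b>t$, and take $a=a_n\to0$ slowly varying as in Section~\ref{sec:bl}. The aim is to show that $\P{\minv_n(S_\epsilon)>tn^\beta}\to\e{-t^{\alpha+1}\int_\Delta\phi}$. The plan is to compare $|\inv_\pts(i)|$ for $i\in S_\epsilon$ with the quantity $|S^x_{n^\gamma X_i}|+|S^y_{n^\gamma Y_i}|$ from Proposition~\ref{prop:useful-bl}, and then to show that points of $S_\epsilon$ outside $S_{a,b}$ cannot realize a value $\le tn^\beta$. The limit is then exactly the one given by Proposition~\ref{prop:useful-bl}.

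\textbf{Step 1: comparing the inversion count with the strip counts.} Take $i\in S_\epsilon$. Every point $j$ to the left of $i$ and above it, and every point below $i$ and to its right, is an inversion. Hence
\[
|\inv_\pts(i)|=|S^x_{n^\gamma X_i}|+|S^y_{n^\gamma Y_i}|-2\,\#\{j:X_j<X_i,\,Y_j<Y_i\},
\]
up to the $\pm1$ contribution of $i$ itself. The subtracted term counts points in the box $[0,X_i]\times[0,Y_i]$. If $i\in S_{a,b}$, this box lies inside $[0,bn^{-\gamma}]^2$. By Lemma~\ref{lem:central bin} and Lemma~\ref{lem:corner bin}, the number of points there is tight, so it is $o(n^\beta)$ in probability. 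Therefore the minimum over $S_{a,b}$ of $|\inv_\pts(i)|$ and the minimum $Z$ of Proposition~\ref{prop:useful-bl} differ by $o_P(n^\beta)$. Since the limit law is continuous in $t$, we get
\[
\P{\minv_n(S_{a,b})>tn^\beta}\to\e{-t^{\alpha+1}\int_\Delta\phi}.
\]

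\textbf{Step 2: ruling out the other points of $S_\epsilon$.} By Lemma~\ref{lem:corner bin}, with high probability $S_{0,b}=S_{a,b}$, so it remains to treat $i\in S_\epsilon$ with $\max(X_i,Y_i)>bn^{-\gamma}$. Suppose $X_i>bn^{-\gamma}$ (the other case is symmetric). The points to the left of $i$ and above $i$ include all of $S^x_b$ except those with $Y_j<Y_i$, so
\[
|\inv_\pts(i)|\ \ge\ |S^x_b|-\#\{j\in S^x_b:Y_j\le Y_i\}\ \ge\ |S^x_b|-|S_\epsilon\cap(S^x_b\cup S^y_b)|,
\]
using $Y_i\le\epsilon$. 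By Lemma~\ref{lem:strip bin}, $|S_b^x|\sim bn^\beta$, and by Lemma~\ref{lem:L bin} the subtracted term is $o_P(n^\beta)$. Hence every such $i$ has $|\inv_\pts(i)|\ge(b-o(1))n^\beta>tn^\beta$ with high probability. Combining this with Step 1 gives the claimed convergence.

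\textbf{Main obstacle.} The delicate point is that Step 2 must hold uniformly over all $i$, and the bound above handles this because it depends only on $S_b^x$, $S_\epsilon$ and the fact that $Y_i\le\epsilon$. The harder part is Step 1: $X_i$ and $Y_i$ are random, while Lemma~\ref{lem:strip bin} concerns fixed (possibly $n$-dependent) thresholds. To handle this I would use monotonicity in $c$ and a finite grid of $c$ values in $[a,b]$ with mesh $\eta$, then let $\eta\to0$. Alternatively, this issue is already absorbed into the statement of Proposition~\ref{prop:useful-bl}, which can simply be invoked.
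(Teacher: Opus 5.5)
Your proposal is correct and follows the paper's proof essentially step for step. Your Step 1 corresponds to Lemma~\ref{lem:corner inv}, which compares $\minv_n(S_{a,b})$ with the strip-count minimum up to an $O_P(1)$ error controlled by $|S_{0,b}|$, fed into Proposition~\ref{prop:useful-bl}; your Step 2 corresponds to Lemma~\ref{lem:almost corner}, using the deterministic bound $|\inv_\pts(i)|\geq|S_b^x|-|S_\epsilon\cap(S_b^x\cup S_b^y)|$ together with Lemmas~\ref{lem:corner bin}, \ref{lem:strip bin} and~\ref{lem:L bin}. Your exact identity, with the factor $2$ on the bottom-left count and the self-count of $i$, is slightly more accurate than the paper's sandwich with a single $|S_{0,b}|$, though either suffices since only an $o_P(n^\beta)$ error is needed.
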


Before proving the proposition, we prove two intermediary lemmas, controlling the minimal inversion in $S_{a,b}$ and in $S_\epsilon\setminus S_{a,b}$.

\begin{lemma}\label{lem:corner inv}
    Using the notations from Section~\ref{sec:bl}, we have that
    \begin{align*}
        n^{-\beta}\left[\minv_n(S_{a,b})-\min\Big\{\big|S_{n^\gamma X_i}^x\big|+\big|S_{n^\gamma Y_i}^y\big|:i\in S_{a,b}\Big\}\right]
        \dconv0\,,
    \end{align*}
    where the convergence occurs in distribution (and equivalently in probability).
    In fact, we can replace $n^{-\beta}$ by any sequence converging to $0$ with $n$.
\end{lemma}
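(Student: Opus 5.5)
We compare, for each point $i\in S_{a,b}$, the exact inversion count $|\inv_\pts(i)|$ with the quantity $|S_{n^\gamma X_i}^x|+|S_{n^\gamma Y_i}^y|$, and show the difference is uniformly small over $i\in S_{a,b}$. Write $Q_i^{\mathrm{TL}}=\{j:X_j<X_i,\,Y_j>Y_i\}$ and $Q_i^{\mathrm{BR}}=\{j:X_j>X_i,\,Y_j<Y_i\}$, so $|\inv_\pts(i)|=|Q_i^{\mathrm{TL}}|+|Q_i^{\mathrm{BR}}|$. Let $Q_i^{\mathrm{BL}}=\{j\neq i:X_j<X_i,\,Y_j<Y_i\}$. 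Up to the point $i$ itself, the left strip $S^x_{n^\gamma X_i}$ is the disjoint union of $Q_i^{\mathrm{TL}}$ and $Q_i^{\mathrm{BL}}$, and likewise $S^y_{n^\gamma Y_i}$ is the union of $Q_i^{\mathrm{BR}}$ and $Q_i^{\mathrm{BL}}$. This gives the exact identity
\begin{align*}
    \big|S_{n^\gamma X_i}^x\big|+\big|S_{n^\gamma Y_i}^y\big|-\big|\inv_\pts(i)\big|
    =2\big|Q_i^{\mathrm{BL}}\big|+2\,,
\end{align*}
with the $+2$ coming from $i$ belonging to both strips. The two minima therefore differ by at most $\max_{i\in S_{a,b}}(2|Q_i^{\mathrm{BL}}|+2)$, since $|\min u_i-\min v_i|\le\max|u_i-v_i|$.

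It remains to show that this maximum is tight, i.e.\ bounded in probability. For $i\in S_{a,b}$ we have $X_i,Y_i\le bn^{-\gamma}$, so $Q_i^{\mathrm{BL}}\subseteq S_{0,b}$. Hence
\begin{align*}
    \max_{i\in S_{a,b}}\big|Q_i^{\mathrm{BL}}\big|\le\big|S_{0,b}\big|\,.
\end{align*}
By Lemma~\ref{lem:corner bin}, $S_{0,b}=S_{a,b}$ with probability tending to one. By Lemma~\ref{lem:central bin}, $|S_{a,b}|$ converges in distribution to a Poisson variable with finite parameter $\int_{[0,b]^2}\phi$, and is therefore tight. So the difference of the two minima is $O_{\mathbb P}(1)$. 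Multiplying a tight sequence by any sequence tending to $0$, in particular $n^{-\beta}$, gives convergence in probability to $0$. This is exactly the statement, including the remark that $n^{-\beta}$ can be replaced by any null sequence.

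The only delicate points are bookkeeping. First, $\minv_n(S_{a,b})$ is indexed by the point set $S_{a,b}$, and the minimum over the empty set must be handled. We take both minima equal to $+\infty$ by convention, or note that the difference is then vacuous, so the empty case contributes nothing. Second, the strips $S^x_c$ should be read with closed boundaries, so that they include $i$. Coincident coordinates occur with probability zero because of the uniform marginals. No step seems to be a real obstacle, since the argument is a direct deterministic comparison followed by tightness from the two binomial lemmas.
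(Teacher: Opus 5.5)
Your proof is correct and follows the paper's argument. The paper compares $|\inv_\pts(i)|$ deterministically with $|S^x_{n^\gamma X_i}|+|S^y_{n^\gamma Y_i}|$, controls the error by $|S_{0,b}|$, and gets tightness of $|S_{0,b}|$ from Lemma~\ref{lem:corner bin} and Lemma~\ref{lem:central bin}. Your exact identity with error $2|Q_i^{\mathrm{BL}}|+2\le 2|S_{0,b}|$ is actually sharper than the paper's sandwich: its lower bound with error $|S_{0,b}|$ instead of $2|S_{0,b}|$ can fail, for instance when $S_{0,b}=\{i\}$, but this is immaterial for tightness.
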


\begin{proof}
    Consider an arbitrary $i\in S_{a,b}$.
    Referring to Figure~\ref{fig:corner} and using that no two points have the same coordinate, we see that
    \begin{align*}
        \big|S_{n^\gamma X_i}^x\big|+\big|S_{n^\gamma Y_i}^y\big|-\big|S_{0,b}\big|\leq
        \big|\inv(i)\big|
        \leq\big|S_{n^\gamma X_i}^x\big|+\big|S_{n^\gamma Y_i}^y\big|\,.
    \end{align*}
    Since $i$ is arbitrary, it follows that
    \begin{align*}
        0\leq
        \min\Big\{\big|S_{n^\gamma X_i}^x\big|+\big|S_{n^\gamma Y_i}^y\big|:i\in S_{a,b}\Big\}-\minv_n(S_{a,b})
        \leq\big|S_{0,b}\big|\,.
    \end{align*}
    The result of the lemma then follows from Lemma~\ref{lem:corner bin} telling us that $S_{0,b}=S_{a,b}$ with high probability, and Lemma~\ref{lem:central bin} telling us that $S_{a,b}$ converges in distribution to a finite Poisson random variable. 
\end{proof}

\begin{lemma}\label{lem:almost corner}
    Using the notations from Section~\ref{sec:bl}, for any $t<b$, we have that
    \begin{align*}
        \P[\Big]{\minv_n(S_\epsilon\setminus S_{a,b})>tn^\beta}
        \longrightarrow1\,.
    \end{align*}
\end{lemma}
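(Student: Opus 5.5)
We must show that, with high probability, every point of $S_\epsilon\setminus S_{a,b}$ has more than $tn^\beta$ inversions. By Lemma~\ref{lem:corner bin}, with high probability $S_{0,b}=S_{a,b}$, so we may work on that event. Every $i\in S_\epsilon\setminus S_{a,b}$ then satisfies $\max\{X_i,Y_i\}>bn^{-\gamma}$. By symmetry of the roles of $x$ and $y$, it suffices to treat points with $X_i>bn^{-\gamma}$; the case $Y_i>bn^{-\gamma}$ is identical with $S^y$ in place of $S^x$.

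The key lower bound comes from inversions to the left. Any point $j$ with $X_j<X_i$ and $Y_j>Y_i$ is an inversion of $i$. Hence
\begin{align*}
    \big|\inv(i)\big|
    \geq\big|S_b^x\big|-\Big|\big\{j:X_j\leq bn^{-\gamma},\,Y_j\leq Y_i\big\}\Big|
    \geq\big|S_b^x\big|-\Big|\big\{j:X_j\leq bn^{-\gamma},\,Y_j\leq \epsilon\big\}\Big|\,.
\end{align*}
The second inequality uses $Y_i\leq\epsilon$, since $i\in S_\epsilon$. The subtracted set is contained in $S_\epsilon\cap(S_b^x\cup S_b^y)$.

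We now bound the two terms. By Lemma~\ref{lem:strip bin} with $c_1=b$, we have $|S_b^x|=(1+o(1))bn^\beta$ in probability. By Lemma~\ref{lem:L bin}, for any $\eta>0$, with high probability $|S_\epsilon\cap(S_b^x\cup S_b^y)|<\eta n^\beta$. Choosing $\eta<(b-t)/2$, we get with high probability, simultaneously for all such $i$,
\[
    |\inv(i)|\geq(b-\eta-o(1))n^\beta>tn^\beta .
\]
The bound is uniform over $i$ because the right-hand side does not depend on $i$. The same holds for points with $Y_i>bn^{-\gamma}$, using $|S_b^y|$.

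Intersecting these finitely many high-probability events gives the lemma. The main point to check is that the subtraction really only involves points inside $[0,\epsilon]^2$. This is where $Y_i\leq\epsilon$ is essential: a point far up in the strip $S_b^x$ creates an inversion and is counted positively, so it does not need to be subtracted. Apart from this, the argument is a direct combination of Lemma~\ref{lem:strip bin} and Lemma~\ref{lem:L bin}, and I do not expect a serious obstacle.
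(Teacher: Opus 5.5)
Your proof is correct and follows essentially the same route as the paper: on the event $S_{0,b}=S_{a,b}$ from Lemma~\ref{lem:corner bin}, any point with $X_i>bn^{-\gamma}$ (resp.\ $Y_i>bn^{-\gamma}$) has at least $|S_b^x|-|S_\epsilon\cap(S_b^x\cup S_b^y)|$ inversions (resp.\ the same with $S_b^y$). Lemmas~\ref{lem:strip bin} and~\ref{lem:L bin} then push this above $tn^\beta$ with high probability. You also consistently use the correct scale $n^\beta$, whereas the paper's written proof has $n^\gamma$ in its intermediate displays.
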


\begin{proof}
    Relying on Figure~\ref{fig:corner}, any point within $[bn^{-\gamma},\epsilon]\times[0,\epsilon]$ has at least as many inversion as there are points within $[0,bn^{-\gamma}]\times[\epsilon,1]$, which means that
    \begin{align*}
        \minv_n\big([bn^{-\gamma},\epsilon]\times[0,\epsilon]\big)
        \geq\big|S_b^x\big|-\big|S_\epsilon\cap S_b^x\big|
        \geq\big|S_b^x\big|-\Big|S_\epsilon\cap\big(S_b^x\cup S_b^y\big)\Big|\,,
    \end{align*}
    where the second lower bound simply uses that $S_b^x\subseteq S_b^x\cup S_b^y$.
    A similar argument where we swap the roles of the $x$ and $y$ coordinates leads to
    \begin{align*}
        \minv_n\big([0,\epsilon]\times[bn^{-\gamma},\epsilon]\big)
        \geq\big|S_b^y\big|-\Big|S_\epsilon\cap\big(S_b^x\cup S_b^y\big)\Big|\,,
    \end{align*}
    Finally, using $S_{a,b}=S_{0,b}$ with high probability, thanks to Lemma~\ref{lem:corner bin}, it follows that
    \begin{align*}
        \P[\Big]{\minv_n(S_\epsilon\setminus S_{a,b})>tn^\gamma}
        \geq\P[\Big]{\min\big\{\big|S_b^x\big|,\big|S_b^y\big|\big\}-\Big|S_\epsilon\cap\big(S_b^x\cup S_b^y\big)\Big|>tn^\gamma}+o(1)\,.
    \end{align*}
    Using the convergence result from Lemma~\ref{lem:strip bin}, this lower bound transforms into
    \begin{align*}
        \P[\Big]{\minv_n(S_\epsilon\setminus S_{a,b})>tn^\gamma}
        \geq\P[\Big]{-\Big|S_\epsilon\cap\big(S_b^x\cup S_b^y\big)\Big|>(t-b)n^\gamma}+o(1)\,,
    \end{align*}
    which then, combined with Lemma~\ref{lem:L bin}, tells us that the right-hand side converges to $1$ when $b-t>0$, exactly as desired.
\end{proof}

We now have all the tools to prove our main proposition.

\begin{proof}[Proof of Proposition~\ref{prop:corner inv}]
    We rely on the definitions and properties from Section~\ref{sec:bl}.
    We further fix some $t>0$ and assume that $b>t$.
    From their respective definitions, observe that
    \begin{align*}
        \minv_n(S_\epsilon)
        =\min\big\{\minv_n(S_{a,b}),\minv_n(S_\epsilon\setminus S_{a,b})\big\}\,.
    \end{align*}
    First, thanks to Lemma~\ref{lem:almost corner}, we see that
    \begin{align*}
        \P[\Big]{\minv_n(S_\epsilon)>tn^\beta}
        &=\P[\Big]{\minv_n(S_{a,b})>tn^\beta,\minv_n(S_\epsilon\setminus S_{a,b})>tn^\beta}\\
        &=\P[\Big]{\minv_n(S_{a,b})>tn^\beta}+o(1)\,.
    \end{align*}
    Then, using Lemma~\ref{lem:corner inv}, the previous probability transforms into
    \begin{align*}
        \P[\Big]{\minv_n(S_\epsilon)>tn^\beta}
        =\P*{\min\Big\{\big|S_{n^\gamma X_i}^x\big|+\big|S_{n^\gamma Y_i}^y\big|:i\in S_{a,b}\Big\}>tn^\beta}+o(1)\,.
    \end{align*}
    To conclude the proof, use Proposition~\ref{prop:useful-bl} to obtain that
    \begin{align*}
        \P[\Big]{\minv_n(S_\epsilon)>tn^\beta}
        =\e*{-t^{\alpha+1}\int_\Delta\phi}+o(1)\,,
    \end{align*}
    which is exactly the desired generalized Rayleigh distribution.
\end{proof}

\section{Standard permutons}

The goal of this section is to prove the statement of Theorem~\ref{thm:main}.
We start by extracting some properties of corner-balanced permutons in Section~\ref{sec:corner}, before combining these results along with those of Section~\ref{sec:main} to prove Theorem~\ref{thm:main} in Section~\ref{sec:main}.

\subsection{Corner-balanced permutons}\label{sec:corner}

In this section, we show that the nodes outside of the corners in a corner-balanced permuton have linear inversion number.
We start with a lemma which does not rely on smoothness before extending the results to standard permutons.

\begin{lemma}\label{lem:top-bottom}
    Let $\mu$ be a corner-balanced permuton.
    Using the notations from Section~\ref{sec:notation}, for any $0<\epsilon\leq1/2$, there exists $0<\rho<1$ and $\delta>0$ such that
    \begin{align*}
        \P*{\frac{1}{n}\minv_n\Big(\big([0,\epsilon]\times[0,\rho]\cup[1-\epsilon,1]\times[\rho,1]\big)^c\Big)\geq\delta}
        \longrightarrow1\,.
    \end{align*}
\end{lemma}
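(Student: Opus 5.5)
Fix $0<\epsilon\leq1/2$. By corner-balancedness we get $\rho\in(0,1)$ such that both $m_1=\mu([0,\epsilon]\times[\rho,1])>0$ and $m_2=\mu([1-\epsilon,1]\times[0,\rho])>0$. Call $A=[0,\epsilon]\times[\rho,1]$ the top-left block and $B=[1-\epsilon,1]\times[0,\rho]$ the bottom-right block. The plan is to show that every point outside the excluded set $E=[0,\epsilon]\times[0,\rho]\cup[1-\epsilon,1]\times[\rho,1]$ is in inversion with all points of $A$ or with all points of $B$, up to a boundary effect. The law of large numbers then makes $|A\cap\pts|$ and $|B\cap\pts|$ linear in $n$.

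\textbf{Case split by position.} Take a point $(x,y)\in E^c$ and split according to $x$ and $y$.
\begin{itemize}
\item If $x\in[0,\epsilon]$, then $y>\rho$ since the point is not in $E$. Hence $(x,y)\in A$ itself, which gives no inversion with $A$. Every point of $B$, however, has $X_j\geq1-\epsilon\geq\epsilon\geq x$ and $Y_j\leq\rho<y$, so it is an inversion.
\item If $x\in[1-\epsilon,1]$, then $y<\rho$ and, symmetrically, every point of $A$ is an inversion.
\item If $\epsilon<x<1-\epsilon$ and $y\leq\rho$, every point of $A$ lies to the left with larger $y$, so it is an inversion.
\item If $\epsilon<x<1-\epsilon$ and $y>\rho$, every point of $B$ lies to the right with smaller $y$, so it is an inversion.
\end{itemize}
Ties on boundaries have probability zero because the marginals are uniform. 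The one subtlety, $\epsilon=1/2$ with $x=1/2$, is also a null event.

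\textbf{Conclusion.} In every case $|\inv_\pts(i)|\geq\min\{N_A,N_B\}-1$, where $N_A$ and $N_B$ are the numbers of sample points in $A$ and $B$; the $-1$ accounts for the case where $i$ itself lies in $A$ or $B$. Since $N_A\sim\mathrm{Bin}(n,m_1)$ and $N_B\sim\mathrm{Bin}(n,m_2)$, the weak law of large numbers gives $N_A/n\to m_1$ and $N_B/n\to m_2$ in probability. Choosing $\delta=\min\{m_1,m_2\}/2$, the event $\min\{N_A,N_B\}-1\geq\delta n$ has probability tending to $1$, which proves the lemma.

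There is no real obstacle here. The only care needed is the bookkeeping of boundary and tie cases, together with the observation that a point of $A$ itself still gets its inversions from $B$.
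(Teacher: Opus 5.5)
Your proof is correct and follows the paper's argument. The paper also takes $\rho$ from corner-balancedness and notes that $\min\{N_A,N_B\}$ lower-bounds the inversion count of every point outside the excluded corners. It then concludes from the linear growth of the two binomial counts. Your four-case split just makes explicit what the paper asserts in one line. The $-1$ is unnecessary: a point of $A$ is in inversion with every point of $B$ and vice versa, so $\min\{N_A,N_B\}$ is itself a lower bound.
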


\begin{proof}
    First, fix $0<\epsilon\leq1/2$ and find $0<\rho<1$ according to the corner-balanced property of the permuton.
    Let $B_-=|\{i:X_i\leq\epsilon,Y_i\geq\rho\}|$ be the number of top-left points and $B_+=|\{i:X_i\geq1-\epsilon,Y_i\leq\rho\}|$ be the number of bottom-right point.
    Since $\mu$ is corner-balanced, $B_\pm$ are binomial random variables of size $n$ and strictly positive parameter $p_\pm$.
    Note that they are not independent, nor are they necessarily identically distributed, but this does not affect their asymptotic behaviour.
    Thus, we see that $B=\min\{B_-,B_+\}$ divided by $n$ converges almost-surely as $n\rightarrow\infty$ towards $\min\{p_-,p_+\}>0$.
    To conclude the proof, simply note that $B$ is a lower bound for the minimal inversion on the desired set, so any $0<\delta<\min\{p_-,p_+\}$ satisfies the desired convergence.
\end{proof}

\begin{lemma}\label{lem:left}
    Let $\mu$ be a corner-balanced and BL-smooth permuton.
    Using the notations from Section~\ref{sec:notation}, for any $0<\epsilon\leq1/2$ small enough, there exists $\delta>0$ such that
    \begin{align*}
        \P*{\frac{1}{n}\minv_n\big([0,\epsilon]\times(\epsilon,1]\big)\geq\delta}
        \longrightarrow1\,.
    \end{align*}
\end{lemma}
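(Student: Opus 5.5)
We bound from below the inversion count of every point $(X_i,Y_i)$ with $X_i\leq\epsilon$ and $Y_i>\epsilon$, using two deterministic populations that such a point must invert with. A point in $[0,\epsilon]\times(\epsilon,1]$ inverts with every point lying strictly to its right and strictly below it. The natural split is on whether $Y_i$ is small or large.

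\emph{Case $Y_i\geq\rho$.} Here we use the bottom-right mass of corner-balancedness. With $\rho$ given by corner-balancedness for this $\epsilon$, any point in $[1-\epsilon,1]\times[0,\rho)$ lies to the right of $i$ (since $1-\epsilon\geq\epsilon$) and below it. So these points are inversions, and their number is binomial with positive parameter $p_+$. This is exactly the argument of Lemma~\ref{lem:top-bottom}, and it gives linear order with high probability.

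\emph{Case $\epsilon<Y_i<\rho$.} This is the delicate case, since a point near height $\epsilon$ at $X_i$ close to $\epsilon$ might have few points to its bottom-right. The plan is to use the points in the region $R=(\epsilon,1]\times[0,\epsilon]$, all of which are to the right of and below $i$. By uniform marginals,
\[
\mu(R)=\mu([0,1]\times[0,\epsilon])-\mu([0,\epsilon]^2)=\epsilon-\mu([0,\epsilon]^2).
\]
BL-smoothness gives, for $\epsilon$ small, $\mu([0,\epsilon]^2)\sim\epsilon^{\alpha+1}\int_{[0,1]^2}\phi$. This is $o(\epsilon)$, since $\alpha>0$ and $\phi$ is bounded on $[0,1]^2$. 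Hence for $\epsilon$ small enough $\mu(R)\geq\epsilon/2>0$. This is where the ``$\epsilon$ small enough'' hypothesis enters. In fact this bound alone handles every point of $[0,\epsilon]\times(\epsilon,1]$, so the first case is not needed. The count $|\{j:(X_j,Y_j)\in R\}|$ is binomial$(n,\mu(R))$, so by the law of large numbers it exceeds $n\mu(R)/2$ with probability tending to $1$.

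Taking $\delta=\mu(R)/2$ (or $\epsilon/4$) then gives
\[
\minv_n\big([0,\epsilon]\times(\epsilon,1]\big)\geq|R\cap\pts|\geq\delta n
\]
with high probability. The only step requiring care is ensuring $\mu([0,\epsilon]^2)<\epsilon$ strictly. This is what BL-smoothness supplies via the scaling in~\eqref{eq:f int} with $\delta=\epsilon$ tending to $0$. A permuton concentrated on the diagonal near the origin would violate it, and that is the reason for the smoothness assumption in the statement.
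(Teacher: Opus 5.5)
Your proof is correct and takes a different, simpler route than the paper's. The paper argues by cases. Points with $Y_i\geq\rho$ are handled by the bottom-right corner mass from corner-balancedness, as in Lemma~\ref{lem:top-bottom}. For $\epsilon<Y_i<\rho$, it combines two masses: the top-left mass $\mu([0,\epsilon/2]\times[\rho,1])>0$, again from corner-balancedness, and the mass of the strip $(\epsilon/2,\epsilon]\times[0,\epsilon]$, which is positive because the density is positive near the origin. Points with $X_i>\epsilon/2$ are inverted with the former and points with $X_i\leq\epsilon/2$ with the latter. You instead use the single rectangle $R=(\epsilon,1]\times[0,\epsilon]$, every point of which is inverted with every point of $[0,\epsilon]\times(\epsilon,1]$. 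You read off $\mu(R)=\epsilon-\mu([0,\epsilon]^2)$ from the uniform $y$-marginal. This removes the case analysis and shows that corner-balancedness plays no role in this lemma. You could use even less. The crude bound $\mu([0,\epsilon]^2)<\epsilon$ suffices, so you do not need the asymptotic $\mu([0,\epsilon]^2)\sim\epsilon^{\alpha+1}\int_{[0,1]^2}\phi$, which requires passing the pointwise equivalence under the integral as in~\eqref{eq:f int}. Moreover, the uniform $x$-marginal gives $\mu([0,\epsilon]\times(\epsilon,1])=\epsilon-\mu([0,\epsilon]^2)=\mu(R)$. So when $\mu(R)=0$, the set $[0,\epsilon]\times(\epsilon,1]$ almost surely contains no sample point, and the statement holds trivially with the convention $\min\emptyset=+\infty$. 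Hence your closing remark is slightly off: a permuton concentrated on the diagonal near the origin is no obstruction. With this observation, your argument proves the lemma for every permuton and every $\epsilon\in(0,1)$. The paper's route buys consistency with the corner-balanced machinery of Lemma~\ref{lem:top-bottom}; yours is shorter and needs strictly weaker hypotheses.
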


\begin{proof}
    Using the BL-smoothness of $\mu$, and since $\phi>0$, we can find $\epsilon_0$ such that $\mu$ has a strictly positive density on $[0,\epsilon]^2$.
    Choose now $\epsilon\leq\epsilon_0$ and use the corner-balanced property of $\mu$ to find $0<\rho<1$ such that $\mu([0,\epsilon/2]\times[\rho,1])>0$.
    Using the same argument as for Lemma~\ref{lem:top-bottom}, we know that
    \begin{align*}
        \P*{\frac{1}{n}\minv_n\big([0,\epsilon]\times[\rho,1]\big)\geq\delta}
        \longrightarrow1\,.
    \end{align*}
    Thus, we now focus on points such that $\epsilon<Y_i<\rho$.

    Using that $\mu$ has a strictly positive density on $[0,\epsilon]^2$, we have that $\mu((\epsilon/2]\times[0,\epsilon])>0$.
    Thus, by using a similar technique as that of Lemma~\ref{lem:top-bottom} again, and by letting $\delta>0$ be such that
    \begin{align*}
        \delta
        <\min\Big\{\mu\big([0,\epsilon/2]\times[\rho,1]\big),\mu\big((\epsilon/2]\times[0,\epsilon]\big)\Big\}\,,
    \end{align*}
    the minimal inversion of any point such that $0\leq X_i\leq\epsilon$ and $\epsilon<Y_i<\rho$ is asymptotically larger than $n\delta$, thus leading to the desired result.
\end{proof}

\subsection{Minimal inversion of a standard permuton}\label{sec:main}

In this section, we combine Proposition~\ref{prop:corner inv} from Section~\ref{sec:corner inv} along with Lemma~\ref{lem:top-bottom} and Lemma~\ref{lem:left} to prove Theorem~\ref{thm:main}.

\begin{proof}[Proof of Theorem~\ref{thm:main}]
    We recall that $\mu$ here is a $(\alpha,\phi)$-standard permuton.
    We further denote by $(\alpha_-,\phi_-)$ and $(\alpha_+,\phi_+)$ respectively the BL and TR smoothness parameters, and invite the reader to look at Section~\ref{sec:ass} for some properties and relationships between these variables.
    
    We start by observing that the symmetry around $y=1-x$ between BL-smooth and TR-smooth, along with Lemma~\ref{lem:left} tells us that, for $\epsilon$ small enough, we have that
    \begin{align*}
        \min\left\{\P*{\frac{1}{n}\minv_n\big([0,\epsilon]\times(\epsilon,1]\big)\geq\delta},
        \P*{\frac{1}{n}\minv_n\big([1-\epsilon,1]\times[0,1-\epsilon)\big)\geq\delta}\right\}
        \longrightarrow1\,.
    \end{align*}
    Combining this with Lemma~\ref{lem:top-bottom}, along with the fact that $\minv_n(A)$ is the minimal value of any partition $A_1,\ldots,A_k$ of $A$, we obtain that
    \begin{align*}
        \P*{\frac{1}{n}\minv_n\Big(\big(S_\epsilon\cup S_\epsilon^{-1}\big)^c\Big)\geq\delta}
        \longrightarrow1\,.
    \end{align*}
    Since any $\epsilon>0$ admits such a $\delta>0$, for any $\eta<1$ such that $\eta>\beta$, we can find $\epsilon=\epsilon_n$ slowly varying and converging to $0$ such that
    \begin{align*}
        \P*{\minv_n\Big(\big(S_\epsilon\cup S_\epsilon^{-1}\big)^c\Big)\geq n^\eta}
        \longrightarrow1\,.
    \end{align*}
    We now fix such $\eta$ and $\epsilon=\epsilon_n$.
    Using again that $\minv_n=\minv_n([0,1]^2)$ can be decomposed from a partition of $[0,1]^2$, the previous convergence tells us that
    \begin{align}\label{eq:BLTR corner}
        \P[\Big]{\minv_n>tn^\beta}
        =\P[\Big]{\minv_n(S_\epsilon)>tn^\beta,\minv_n(S_\epsilon^{-1})>tn^\beta}+o(1)\,.
    \end{align}
    It now suffices to study this probability.
    
    Using Proposition~\ref{prop:corner inv} and the symmetry between BL and TR-smooth along with the current notations, we know that, for any $t>0$, we have
    \begin{align*}
        \P[\Big]{\minv_n(S_\epsilon^\pm)>tn^{\beta_\pm}}
        \longrightarrow\e*{-t^{\alpha_\pm+1}\int_\Delta\phi_\pm}\,,
    \end{align*}
    where $S_\epsilon^-=S_\epsilon$ and $S_\epsilon^+=S_\epsilon^{-1}$.
    We now observe that $\minv_n(S_\epsilon)$ and $\minv_n(S_\epsilon^{-1})$ are asymptotically independent.
    Indeed, retracing the proofs from Section~\ref{sec:bl} for both the bottom-left and top-right corners, Lemma~\ref{lem:corner bin} and Lemma~\ref{lem:L bin} both rely on standard bounds for binomial random variables, so their symmetric counterpart also satisfy this convergence.
    Then the Poisson limit of $|S_{a,b}|$ from Lemma~\ref{lem:central bin} holds jointly with that of $|S_{a,b}^{-1}|$, with the appropriate parameters.
    Furthermore, the result from Lemma~\ref{lem:strip bin}, which relies on convergence of binomial random variables, also applies when considering the top and right intervals, instead of the bottom and left ones (respectively $S_c^y$ and $S_c^x$).
    Finally, Proposition~\ref{prop:useful-bl} simply combines the previous results along with the distribution close to $(0,0)$, so it naturally extends independently to the points close to $(1,1)$.
    To conclude, Proposition~\ref{prop:corner inv}, which simply relies on the aforementioned results, can then be extended to both bottom-left and top-right corners, telling us that the pair $n^{-\beta_-}\minv_n(S_\epsilon),n^{-\beta_+}\minv_n(S_\epsilon^{-1})$ jointly converges to a pair of independent generalized Rayleigh random variables with the appropriate parameters.

    Rewrite the right-hand side probability from~\eqref{eq:BLTR corner} to make powers of $\alpha_\pm/(\alpha_\pm+1)$ appear:
    \begin{align*}
        \P[\Big]{\minv_n(S_\epsilon)>tn^\beta,\minv_n(S_\epsilon^{-1})>tn^\beta}
        =\P[\bigg]{\minv_n(S_\epsilon)>\Big(tn^{\beta-\beta_-}\Big)n^{\beta_-},\minv_n(S_\epsilon^{-1})>\Big(tn^{\beta-\beta_+}\Big)n^{\beta_+}}\,.
    \end{align*}
    Since $\beta-\beta_\pm\leq0$ and equal to $0$ if and only if $\alpha_\pm=\alpha=\min\{\alpha_0,\alpha_+\}$, or equivalently, if and only if $\alpha_\pm\leq\alpha_\mp$, the previous result combined with the asymptotic independence and the application of Proposition~\ref{prop:corner inv} to both bottom-left and top-right corners tells us that
    \begin{align*}
        \P[\Big]{\minv_n(S_\epsilon)>tn^\beta,\minv_n(S_\epsilon^{-1})>tn^\beta}
        &\sim\e*{-\Big(tn^{\beta-\beta_-}\Big)^{\alpha_-+1}\int_\Delta\phi_--\Big(tn^{\beta-\beta_+}\Big)^{\alpha_++1}\int_\Delta\phi_+}\\
        &\sim\e*{-t^{\alpha+1}\I{\alpha_-\leq\alpha_+}\int_\Delta\phi_--t^{\alpha+1}\I{\alpha_+\leq\alpha_-}\int_\Delta\phi_+}\,.
    \end{align*}
    Finally, using that $\phi=\phi_-\I{\alpha_-\leq\alpha_+}+\phi_+\I{\alpha_+\leq\alpha_-}$, this convergence along with~\eqref{eq:BLTR corner} exactly corresponds to the statement of Theorem~\ref{thm:main}.
\end{proof}

\section{Tile-standard permutons}\label{sec:tile-standard}

From now on, we focus on the case of tile-standard permutons.
We invite the reader to take a look at Section~\ref{sec:tile} again, since it introduces most of the relevant terminology.
For the rest of this section $\mu$ is a $(\rect,\alpha,\phi)$-standard permuton, with $\rect=\{S_1,\ldots,S_k\}$ a tiling.
For each $\ell\in[k]$, $(x_\ell,y_\ell)$ denotes the bottom-left corner of $S_\ell$ and $\delta_\ell>0$ its side length (since it is a square), meaning that $S_\ell=[x_\ell,x_\ell+\delta_\ell]\times[y_\ell,y_\ell+\delta_\ell]$.
We also have that $\mu_\ell=\mu_{/S_\ell}$ is $(\alpha_\ell,\phi_\ell)$-standard, meaning that the corresponding linearly rescaled permuton on $[0,1]^2$ is $(\alpha_\ell,\delta_\ell^{\alpha_\ell}\phi_\ell)$-standard.
We finally recall that $\sigma_\rect$ is the permutation corresponding to $\rect$, that $\diag_\rect$ is the set of splittings, and that $\mass_\rect$, defined in~\eqref{eq:mass}, is the minimal anti-diagonal mass, relying on the function
\begin{align*}
    L_\rect(\ell)
    =\sum_{m\in\inv_{\sigma_\rect}(\ell)}\sqrt{\lambda_2(S_m)}
    =\sum_{m\in\inv_{\sigma_\rect}(\ell)}\delta_m\,.
\end{align*}
We further drop the dependency on $\sigma_\rect$ from $\inv$.

In order to study the minimal inversion of a permuton organized by a tiling, we individually study the minimal inversion on each square.
To do so, we denote by
\begin{align*}
    N_\ell
    =\big|\big\{i:(X_i,Y_i)\in S_\ell\big\}\big|
\end{align*}
the number of points within $S_\ell$, having in mind that each point belongs to a unique square with probability $1$ so that $N_1+\ldots+N_k=n$.
We further rely on the notations from Section~\ref{sec:notation} and let $\minv_n(S_\ell)$ be the minimal inversion of the points falling within $S_\ell$ as well as use $\beta_1,\ldots,\beta_k$ and $\gamma_1,\ldots,\gamma_k$ for the parameters corresponding to $\alpha_1,\ldots,\alpha_k$.

The rest of this section is organized as follows.
In Section~\ref{sec:coherent} we prove Proposition~\ref{prop:unique tile} which states that the parameters of a tile-standard permuton are unique.
In Section~\ref{sec:non-diag}, we study the minimal inversion of points within each square.
Finally, in Section~\ref{sec:tile result}, we prove Theorem~\ref{thm:tile}, which concludes the set of results for this article.

\subsection{Coherence of the tile-standard definition}\label{sec:coherent}

We start this section by showing that any permuton which can be decomposed according to some tiling with more than $2$ sets cannot be standard.
This comes from the fact that smoothness requires non-zero density around $(0,0)$ and $(1,1)$, whereas the corner-balanced property forces the distribution to have some non-zero mass around $(0,1)$ and $(1,0)$.
A tiling $\rect=\{S_1,\ldots,S_k\}$ is said to be \textit{adapted} to $\mu$ if
\begin{align*}
    \mu\Big(S_1\cup\cdots\cup S_k\Big)
    =1\,.
\end{align*}

\begin{lemma}\label{lem:tile ass}
    Let $\mu$ be a permuton and $\rect=\{S_1,\ldots,S_k\}$ be a tiling adapted to $\mu$.
    Denote by $\sigma=\sigma_\rect$ the permutation corresponding to $\rect$.
    Then we have the following properties.
    \begin{itemize}
        \item If $\sigma(1)<\sigma(k)$, then $\mu$ is not corner-balanced.
        \item If $\sigma(1)\neq1$, then $\mu$ is not BL-smooth.
        \item If $\sigma(k)\neq k$, then $\mu$ is not TR-smooth.
    \end{itemize}
    It follows that, if $k\geq2$, then $\mu$ is not standard.
\end{lemma}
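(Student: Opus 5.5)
We treat the three bullets separately; each uses only the geometry of the tiling and the fact that $\mu$ is supported on $S_1\cup\cdots\cup S_k$. Write $S_\ell=[x_\ell,x_\ell+\delta_\ell]\times[y_\ell,y_\ell+\delta_\ell]$. Because the $x$-projections of the squares tile $[0,1]$ in the order $S_1\prec_x\cdots\prec_xS_k$, we have $x_1=0$ and $x_k+\delta_k=1$. Likewise, the $y$-projections tile $[0,1]$ in the order given by $\sigma$. Hence the square with $\sigma(\ell)=1$ has $y_\ell=0$, and the one with $\sigma(\ell)=k$ has $y_\ell+\delta_\ell=1$.

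\emph{Second bullet.} Suppose $\sigma(1)\neq1$. Then $S_1$ does not touch $y=0$, so $y_1\geq\delta_{m}>0$, where $m$ is the square with $\sigma(m)=1$. Also $m\neq1$, so $S_1\prec_xS_m$ and $x_m\geq\delta_1>0$. Set $\eta=\min\{\delta_1,y_1\}>0$. Any square meeting the interior of $[0,\eta]^2$ must have $x$-interval meeting $[0,\eta)$, hence must be $S_1$ (as $\eta\le\delta_1$ and the $x$-intervals overlap only at endpoints). But $S_1$ lies above height $y_1\geq\eta$. Therefore $\mu([0,\eta)^2)=0$, up to boundary segments, which have $\mu$-measure zero because the marginals are uniform. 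If $\mu$ were BL-smooth with density $f$ on $[0,\epsilon]^2$, then $f=0$ a.e. on $[0,\min\{\epsilon,\eta\}]^2$. This contradicts $f(tx,ty)\sim t^{\alpha-1}\phi(x,y)$ with $\phi>0$ a.e.: for a.e. $(x,y)$ the asymptotic equivalence with a positive quantity forces $f(tx,ty)>0$ for small $t$. To make this rigorous, I would integrate. By~\eqref{eq:f int}, $\int_{[0,t]^2}f\sim t^{\alpha+1}\int_{[0,1]^2}\phi>0$ for small $t$, which contradicts $\mu([0,t]^2)=0$. \emph{Third bullet.} This follows by the symmetry $y=1-x$. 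That symmetry reverses the order of the tiling and maps $\sigma$ to its reverse-complement, so the condition $\sigma(k)\neq k$ becomes $\sigma(1)\neq1$ for the reflected tiling.

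\emph{First bullet.} Suppose $\sigma(1)<\sigma(k)$, so $S_1\prec_yS_k$ and hence $y_1+\delta_1\leq y_k$. Corner-balance requires, for $\epsilon\le\delta_1$ (and $\epsilon\le\delta_k$), a $\rho$ with $\mu([0,\epsilon]\times[\rho,1])>0$ and $\mu([1-\epsilon,1]\times[0,\rho])>0$. By the argument above, mass in $[0,\epsilon)\times[0,1]$ lies in $S_1$, hence has $y\le y_1+\delta_1$. Similarly, mass in $(1-\epsilon,1]\times[0,1]$ lies in $S_k$, hence has $y\ge y_k\geq y_1+\delta_1$. So the first condition needs $\rho<y_1+\delta_1$ and the second needs $\rho>y_k$, which is impossible. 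Boundary lines are again null. Care is needed with equality cases $\rho=y_k=y_1+\delta_1$, but those lines carry zero mass. Since the definition requires this for every $\epsilon$, $\mu$ is not corner-balanced.

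\emph{Conclusion.} Suppose $k\geq2$ and $\mu$ is standard. Then BL-smoothness gives $\sigma(1)=1$ and TR-smoothness gives $\sigma(k)=k$, so $\sigma(1)<\sigma(k)$ and the first bullet contradicts corner-balance. The main obstacle is the careful bookkeeping showing that near the left edge only $S_1$ carries mass, i.e. that boundary overlaps are $\mu$-null. This uses the uniform marginals: any vertical or horizontal line has $\mu$-measure $0$.
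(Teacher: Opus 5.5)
Your proof is correct and follows essentially the same route as the paper. Both arguments use that near the left (resp.\ right) edge only $S_1$ (resp.\ $S_k$) carries mass, deduce non-corner-balance from $S_1\prec_y S_k$ and non-BL-smoothness from $\mu$ vanishing near $(0,0)$ when $\sigma(1)\neq1$, and conclude by contraposition. Your choice $\eta=\min\{\delta_1,y_1\}$ and the integrated contradiction via~\eqref{eq:f int} are in fact slightly more careful than the paper, which only takes $\epsilon$ smaller than the side lengths of $S_1$ and $S_k$.
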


\begin{proof}
    We assume without loss of generality that $S_1\prec_x\cdots\prec_xS_k$ are ordered according to their horizontal intervals, meaning that $S_1$ is the leftmost rectangle and $S_k$ is the rightmost one.
    Further find $\epsilon>0$ small enough such that it is smaller than the side length of both $S_1$ and $S_k$.
    We now consider the different cases.

    If $\sigma(1)<\sigma(k)$, then there exists $r$ such that $\max\{y:(x,y)\in S_1\}\leq r\leq\min\{y:(x,y)\in S_k\}$.
    Consider now any $0<\rho<1$.
    If $\rho\geq r$, then $[0,\epsilon]\times[\rho,1]$ intersects the union of the rectangles at most on the single line ($[0,\epsilon]\times\{\rho\}$), so it has measure  $0$ and $\mu([0,\epsilon]\times[\rho,1])=0$.
    Conversely, if $\rho\leq r$, then $\mu([1-\epsilon,1]\times[0,\rho])=0$.
    In both cases, the minimum of the two measures is $0$ and $\mu$ is not corner-balanced.

    Since the other two properties are symmetric with respect to the smoothness properties, we only prove the first one.
    When $\sigma(1)\neq1$, then $\min\{y:(x,y)\in S_1\}>0$ and so $\mu([0,\epsilon]^2)=0$.
    This is not compatible with the BL-smoothness as $\phi$ is assumed to be strictly positive almost-everywhere.

    To conclude the proof, simply use the contraposite of the three statements to observe that if $\mu$ is standard, then $\sigma(1)\geq\sigma(k)$, $\sigma(1)=1$, and $\sigma(k)=k$, so that necessarily $k=1$.
\end{proof}

With this lemma, we now prove the main result of this section, stating that the parameters of a tile-standard permuton are uniquely defined.

\begin{proposition}\label{prop:unique tile}
    Let $\mu$ be a tile-standard permuton as defined in Section~\ref{sec:tile}.
    Then there exists a unique triplet $(\rect,\alpha,\phi)$ such that $\mu$ is $(\rect,\alpha,\phi)$-tile-standard.
\end{proposition}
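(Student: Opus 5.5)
Existence is given by hypothesis, so the work is uniqueness: if $\mu$ is both $(\rect,\alpha,\phi)$- and $(\rect',\alpha',\phi')$-tile-standard, then $\rect=\rect'$. Once the tilings agree, each restriction $\mu_\ell=\mu_{/S_\ell}$ is the same measure under both decompositions. The smoothness parameters of a standard sub-permuton are determined by its density near the corners: $f(tx,ty)\sim t^{\alpha_\ell-1}\phi_\ell(x,y)$ fixes $\alpha_\ell$ as the scaling exponent and then $\phi_\ell$ almost everywhere, and likewise at the top-right corner. Hence $\alpha_\ell$ and $\phi_\ell$ are unique. Since $\diag_\rect$ depends only on $\rect$, the formulas defining $\alpha$ and $\phi$ then give $\alpha=\alpha'$ and $\phi=\phi'$ (almost everywhere, which is the natural notion of equality for $\phi$).

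To compare $\rect$ and $\rect'$, I would form their common refinement. Every tile of either tiling has an $x$-interval and a $y$-interval, and the $x$-endpoints of all tiles in $\rect\cup\rect'$ cut $[0,1]$ into finitely many intervals, and similarly for $y$. Take a tile $S\in\rect$ and consider the sets $S\cap S'$ with $S'\in\rect'$ and $\mu(S\cap S')>0$. By uniform marginals, the mass of $\mu$ in the vertical strip over any $x$-interval $I$ equals $|I|$. Adaptedness of both tilings means each vertical strip $I\times[0,1]$, with $I$ inside the $x$-interval of both $S$ and $S'$, carries mass only in $S\cap S'$ and in the tiles of either tiling sharing that column. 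So the rectangles $S\cap S'$ of positive mass are squares forming a tiling $\rect_S$ of $S$ adapted to $\mu_S$, with the marginals of $\mu_S$ restricted to each piece still Lebesgue. This marginal bookkeeping is the step I expect to be the main obstacle. One has to argue that $\mu$ restricted to $S\cap S'$ has Lebesgue marginals on both sides, so that the pieces are genuinely squares with matching $x$- and $y$-lengths rather than rectangles. I would do it by noting that within the column $I$ only one tile of $\rect'$ can carry mass, namely the tile whose $x$-interval contains $I$, so the $x$-marginal of $\mu$ on $S\cap S'$ is $\lambda_1$ on that column; the symmetric argument handles rows.

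Rescaling $S$ to $[0,1]^2$ turns $\rect_S$ into a tiling adapted to a standard permuton. Lemma~\ref{lem:tile ass} then forces $\rect_S$ to have a single element, so $S\subseteq S'$ for a unique $S'\in\rect'$. Running the same argument with the roles of the two tilings exchanged gives $S'\subseteq S''$ for some $S''\in\rect$. Since $S$ and $S''$ are tiles of $\rect$ with $S\subseteq S''$, and distinct tiles have disjoint interiors, $S=S''=S'$. Hence $\rect=\rect'$, and uniqueness of the remaining parameters follows from the first paragraph.
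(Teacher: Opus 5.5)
Your proof is correct and uses the same two ingredients as the paper. First, the common refinement of two adapted tilings, whose positive-mass pieces are squares by the uniform marginals. Second, Lemma~\ref{lem:tile ass}, which rules out any nontrivial refinement of a tile carrying a standard sub-permuton. The difference is only in organization: you compare the two tile-standard tilings directly by mutual inclusion, whereas the paper first asserts that a minimal adapted tiling exists. Your route avoids having to justify that existence, which is not automatic for general permutons.
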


\begin{proof}
    Observe that if we show that the tiling is unique, then the uniqueness of $\alpha$ and $\phi$ follows from the definition.
    Consider now two tilings $\rect=\{S_1,\ldots,S_k\}$ and $\rect'=\{S'_1,\ldots,S'_\ell\}$ adapted to $\mu$.
    Then the new tiling $\rect''=\{R''_1,\ldots,R''_m\}$ obtained by taking the intersection of the squares from $\rect$ and $\rect'$ and only keeping the ones with non-empty interior is itself a tiling adapted to $\mu$, since
    \begin{align*}
        \mu\Big(S''_1\cup\cdots\cup S''_m\Big)
        =\mu\bigg(\Big(S_1\cup\cdots\cup S_k\Big)\cap\Big(S'_1\cup\cdots\cup S'_\ell\Big)\bigg)
        =1
    \end{align*}
    and since the uniform margins of $\mu$ prevents any intersection to not be a square.
    This implies that there exists a minimal tiling adapted to $\mu$ (not necessarily requiring the corresponding decomposition to be into standard sub-permutons).
    But then, for any adapted tiling $\rect=\{S_1,\ldots,S_k\}$ which is not minimal, there exists a sub-permuton $\mu_{/S_\ell}$ which will be decomposed into more than $2$ new sub-permutons when applying the minimal tiling.
    Applying Lemma~\ref{lem:tile ass}, $\mu_{/S_\ell}$ is not standard and thus $\mu$ cannot be $(\rect,\alpha.\phi)$-tile-standard for any $\alpha$ and $\phi$.
    This proves the uniqueness of $\rect$ and thus also that of $\alpha$ and $\mu$.
\end{proof}

\subsection{Minimal inversion on the squares}\label{sec:non-diag}

In this section, we study the convergence in distribution of all the individual minimal inversions on the squares $S_1,\ldots,S_k$.
We start by showing in Proposition~\ref{prop:diag} the Rayleigh-distributed limit of splittings, before showing in Proposition~\ref{prop:non-diag} that non-splittings have minimal inversion of order $n$.

\begin{proposition}\label{prop:diag}
    Using the notations and assumptions from Section~\ref{sec:tile-standard}, and by letting $\Delta$ be the unit square triangle, for any $\ell\in\diag_\rect$, $t>0$, and $n_\ell=n_{\ell,n}\rightarrow\infty$, we have
    \begin{align*}
        \P*{\minv_n(S_\ell)>tn_\ell^{\beta_\ell}~\Big|~N_\ell=n_\ell,\big\{(X_i,Y_i)\big\}\cap S_\ell^c}
        \longrightarrow\e*{-\delta_\ell^{\alpha_\ell}t^{\alpha_\ell+1}\int_\Delta\phi_\ell}\,.
    \end{align*}
\end{proposition}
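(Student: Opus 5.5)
The plan is to reduce to Theorem~\ref{thm:main} applied to the rescaled sub-permuton. The key observation is that, since $\ell$ is splitting, no other square lies to the upper-left or bottom-right of $S_\ell$. Take any point $i$ with $(X_i,Y_i)\in S_\ell$ and any point $j$ in another square $S_m$. Because $\rect$ is a tiling, either $S_m\prec_xS_\ell$ or $S_\ell\prec_xS_m$, and likewise for $\prec_y$. As $\ell$ is splitting, the orders agree: if $S_m\prec_xS_\ell$ then $S_m\prec_yS_\ell$, and symmetrically. Hence, almost surely (no ties, and boundaries have measure zero since the marginals are uniform), $(X_i-X_j)(Y_i-Y_j)>0$. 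So every inversion of a point in $S_\ell$ comes from another point in $S_\ell$. That is, $\minv_n(S_\ell)$ equals the minimal inversion of the point set $\pts\cap S_\ell$ alone, and the points outside $S_\ell$ play no role.

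Next comes the conditioning. Given $N_\ell=n_\ell$ and the positions of the points in $S_\ell^c$, the $n_\ell$ points inside $S_\ell$ are i.i.d.\ with law $\mu_{/S_\ell}$ normalized, i.e.\ $\mu_\ell/\mu(S_\ell)$. By the uniform marginals and the tiling structure, $\mu(S_\ell)=\delta_\ell$, since the vertical strip above the $x$-interval of $S_\ell$ meets only $S_\ell$ up to null sets. They are independent of the outside points. Applying the affine map $(x,y)\mapsto((x-x_\ell)/\delta_\ell,(y-y_\ell)/\delta_\ell)$ preserves inversions and turns these points into $n_\ell$ i.i.d.\ samples of the rescaled permuton. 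By the definition of a standard sub-permuton, this rescaled permuton is $(\alpha_\ell,\delta_\ell^{\alpha_\ell}\phi_\ell)$-standard. The conditional probability therefore equals exactly $\P{\minv_{n_\ell}^{\tilde\mu_\ell}>tn_\ell^{\beta_\ell}}$, where $\tilde\mu_\ell$ is the rescaled permuton. This is a deterministic function of $n_\ell$ alone.

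Finally, Theorem~\ref{thm:main} applied to $\tilde\mu_\ell$ gives $\P{\minv_{m}^{\tilde\mu_\ell}>tm^{\beta_\ell}}\to\e{-t^{\alpha_\ell+1}\int_\Delta\delta_\ell^{\alpha_\ell}\phi_\ell}$ as $m\to\infty$. The limit is continuous in $t$, so convergence in distribution gives pointwise convergence of the tail. Since $n_\ell\to\infty$ along any sequence, composing with $m=n_\ell$ yields the claim.

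The main point needing care is the first step: making rigorous that splitting implies no cross-square inversions, including the boundary-sharing cases. Points on shared boundary lines occur with probability zero, because the marginals are Lebesgue and lines have zero marginal mass. A secondary check is that the rescaling convention (the factor $\delta_\ell$ in $\tilde f_S$ and the normalization by $\mu(S_\ell)=\delta_\ell$) matches the stated $\delta_\ell^{\alpha_\ell}$. This is how the paper defines standard sub-permutons, so it should follow directly.
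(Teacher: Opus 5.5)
Your proposal is correct and follows the same route as the paper. Because $\ell$ is splitting, there are no cross-square inversions. Conditioning on $N_\ell$ and on the outside points then leaves $n_\ell$ i.i.d.\ samples of the rescaled $(\alpha_\ell,\delta_\ell^{\alpha_\ell}\phi_\ell)$-standard permuton, and Theorem~\ref{thm:main} finishes the argument. Your extra details only make explicit what the paper leaves implicit: that $\prec_x$ and $\prec_y$ agree for a splitting square, that $\mu(S_\ell)=\delta_\ell$, and that the limiting tail is continuous.
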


\begin{proof}
    We start by observing that no inversion is created between points within $S_\ell$ and points within other squares $S_m$, with $m\neq\ell$.
    This implies that the variable $\minv(S_\ell)$ is independent of $\{(X_i,Y_i)\}\cap S_\ell^c$.
    Since inversions are created by the relative order of points, by letting $\Tilde{\mu_\ell}$ be the transformation of $\mu_\ell$ into a permuton on $[0,1]^2$, it follows that
    \begin{align*}
        \P*{\minv_n(S_\ell)>tn_\ell^{\beta_\ell}~\Big|~N_\ell=n_\ell,\big\{(X_i,Y_i)\big\}\cap S_\ell^c}
        =\P[\Big]{\minv_{n_\ell}^{\Tilde{\mu}_\ell}>tn_\ell^{\beta_\ell}}\,.
    \end{align*}
    Now, recall from Section~\ref{sec:tile} that $\mu_\ell$ is $(\alpha_\ell,\phi_\ell)$-standard if and only if $\Tilde{\mu}_\ell$ is $(\alpha_\ell,\delta_\ell^{\alpha_\ell}\phi_\ell)$-standard.
    By applying Theorem~\ref{thm:main} to the right-hand side, we obtain that
    \begin{align*}
        \P*{\minv_n(S_\ell)>tn_\ell^{\beta_\ell}~\Big|~N_\ell=n_\ell,\big\{(X_i,Y_i)\big\}\cap S_\ell^c}
        \longrightarrow\e*{-t^{\alpha_\ell+1}\int_\Delta \delta_\ell^{\alpha_\ell}\phi_\ell}\,,
    \end{align*}
    exactly as desired.
\end{proof}

\begin{proposition}\label{prop:non-diag}
    Using the notations and assumptions from Section~\ref{sec:tile-standard}, for any $\ell\notin\diag_\rect$, we have
    \begin{align*}
        \frac{\minv_n(S_\ell)}{n}
        \dconv L_\rect(\ell)\,,
    \end{align*}
    where the convergence occurs in distribution (and equivalently in probability).
\end{proposition}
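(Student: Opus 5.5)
For a point $i$ with $(X_i,Y_i)\in S_\ell$, the plan is to split $|\inv_\pts(i)|$ into the inversions with points in other squares and the inversions with points in $S_\ell$ itself:
\begin{align*}
    \big|\inv_\pts(i)\big|
    =\sum_{m\in\inv(\ell)}N_m+\big|\inv_\pts(i)\cap S_\ell\big|\,.
\end{align*}
The first term needs two facts. Every point of $S_m$ with $m\in\inv(\ell)$ forms an inversion with every point of $S_\ell$, because the squares are separated in both coordinates in the inverted order. Conversely, no point of $S_m$ with $m\notin\inv(\ell)\cup\{\ell\}$ forms an inversion with a point of $S_\ell$. Both hold almost surely, since $\mu$ gives zero mass to the boundary lines where squares may touch. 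This term does not depend on $i$, so
\begin{align*}
    \minv_n(S_\ell)
    =\sum_{m\in\inv(\ell)}N_m+\minv'_n(S_\ell)\,,
\end{align*}
where $\minv'_n(S_\ell)$ is the minimal inversion computed among the points of $S_\ell$ only.

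The first term is handled by the law of large numbers. Each $N_m$ is binomial with parameter $\mu(S_m)$. Since $\mu$ has uniform marginals and the tiling partitions the horizontal axis up to null sets, $\mu(S_m)$ equals the Lebesgue measure of the horizontal projection of $S_m$, which is $\delta_m=\sqrt{\lambda_2(S_m)}$. Hence $N_m/n\to\delta_m$ in probability, and the first term divided by $n$ converges to $L_\rect(\ell)$. Note that $L_\rect(\ell)>0$ because $\ell\notin\diag_\rect$, although this is not needed.

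It then remains to show $\minv'_n(S_\ell)/n\to0$ in probability. Conditionally on $N_\ell=n_\ell$, the points of $S_\ell$ are i.i.d. according to the rescaled sub-permuton $\Tilde{\mu}_\ell$, which is $(\alpha_\ell,\delta_\ell^{\alpha_\ell}\phi_\ell)$-standard. As in the proof of Proposition~\ref{prop:diag}, the inner minimal inversion therefore has the law of $\minv^{\Tilde{\mu}_\ell}_{n_\ell}$. By Theorem~\ref{thm:main}, $n_\ell^{-\beta_\ell}\minv^{\Tilde{\mu}_\ell}_{n_\ell}$ is tight, and $\beta_\ell<1$, so $\minv'_n(S_\ell)/n_\ell\to0$ in probability whenever $n_\ell\to\infty$. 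Since $N_\ell\le n$ and $N_\ell\to\infty$ almost surely, the conditional statement yields the unconditional one, for instance by dominated convergence on the conditional probabilities. A cruder alternative avoids Theorem~\ref{thm:main} altogether: bound the inner term by the inversion count of one explicit point near the bottom-left corner of $S_\ell$, whose inversions within $S_\ell$ are $O(\epsilon n)$ by smoothness.

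The only step needing care is the conditioning/un-conditioning argument and the exact identification of the inter-square inversions, including the boundary null-set issue. Neither is a real obstacle, so the proof should be short. Convergence in distribution to a constant is then equivalent to convergence in probability, which gives the proposition.
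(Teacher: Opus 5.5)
Your proposal is correct and follows essentially the same route as the paper. You write $\minv_n(S_\ell)$ as $\sum_{m\in\inv(\ell)}N_m$ plus the minimal inversion internal to $S_\ell$, use the law of large numbers for $N_m/n\to\delta_m$, and apply Theorem~\ref{thm:main} to the rescaled standard sub-permuton $\Tilde{\mu}_\ell$ to make the internal term $O(n^{\beta_\ell})=o(n)$, while spelling out the null-set and conditioning/unconditioning details that the paper leaves implicit.
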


\begin{proof}
    Observe first that, since $k$ is finite, the random vector $(N_1,\ldots,N_k)$ normalized by $n$ converges (almost-surely) to $(\delta_1,\ldots,\delta_k)$, where $\delta_\ell=\mu(S_\ell)$ is the side length of $S_\ell$ and the equality holds thanks to the uniform marginals of $\mu$.
    Moreover, for any $m\in\inv(\ell)$, the points in $S_m$ form an inversion with the points in $S_\ell$, and for any $m\notin\{\ell\}\cup\inv(\ell)$, the points in $S_m$ do not create a inversions with points from $S_\ell$.
    Finally, conditionally given $N_\ell=n_\ell$, the distribution of the minimal inversion created by points within $S_\ell$ is distributed as $\minv_{n_\ell}^{\Tilde{\mu}_\ell}$, where $\Tilde{\mu}_\ell$ is the linear transformation of $\mu_\ell$ into a permuton on $[0,1]^2$.
    It follows that
    \begin{align*}
        \frac{\minv_n(S_\ell)}{n}
        \overset{d}{=}\frac{\Tilde{\minv}_{N_\ell}^{\Tilde{\mu}_\ell}}{n}+\frac{1}{n}\sum_{m\in\inv(\ell)}N_m\,.
    \end{align*}
    where the equality is in distribution and, conditionally given $N_1,\ldots,N_k$, $\Tilde{\minv}_{N_\ell}^{\Tilde{\mu}_\ell}$ is the random minimal inversion of $N_\ell$ points distributed according to $\Tilde{\mu}_\ell$.
    To conclude, recall that $\Tilde{\mu}_\ell$ is assumed to be standard, so this random variable is of order $n^{\beta_\ell}=o(n)$.
    The desired convergence then follows from the convergence of $(N_1,\ldots,N_k)$ and the definition of $L_\rect(\ell)$ (in Section~\ref{sec:tile-standard}).
\end{proof}

\subsection{Minimal inversion of tile-standard permutons}\label{sec:tile result}

In this section, we combine the previous results to prove the limiting distribution of the minimal inversion of a tile-standard permuton.

\begin{proof}[Proof of Theorem~\ref{thm:tile}]
    Instead of directly proving the statement as found in the theorem, we prove the two different convergence results explained right below it, corresponding to whether $\diag_\rect$ is empty or not.

    When $\diag_\rect$ is empty, we know that Proposition~\ref{prop:non-diag} applies to all $\ell\in[k]$.
    Further using that $\minv_n$ is the minimum of all $\minv_n(S_\ell)$ for $\ell\in[k]$ along with the fact that $k$ is finite, so that the individual convergence results imply the joint convergence, we directly see that
    \begin{align*}
        \frac{\minv_n}{n}
        =\min\left\{\frac{\minv_n(S_\ell)}{n}:\ell\in[k]\right\}
        \dconv\min\Big\{L_\rect(\ell):\ell\in[k]\Big\}
        =\mass_\rect\,.
    \end{align*}
    We now assume that $\diag_\rect\neq\emptyset$.

    First of all, since $\alpha$ is finite in this case (so that $\beta<1$) and Proposition~\ref{prop:non-diag} still applies to all $\ell\notin\diag_\rect$, we directly observe that
    \begin{align*}
        \P[\Big]{\minv_n>tn^\beta}
        =\P[\Big]{\minv_n(S_\ell)>tn^\beta,\forall\ell\in[k]}
        =\P[\Big]{\minv_n(S_\ell)>tn^\beta,\forall\ell\in\diag_\rect}+o(1)\,.
    \end{align*}
    Now, using that $n^{-1}(N_1,\ldots,N_k)$ converges almost-surely to $(\delta_1,\ldots,\delta_\ell)$, we can further condition on $N_\ell$ with $\ell\in\diag_\rect$ and obtain that
    \begin{align*}
        \P[\Big]{\minv_n>tn^\beta}
        &=\P*{\minv_n(S_\ell)>tn^\beta,\forall\ell\in\diag_\rect~\bigg|~N_\ell=\big(\delta_\ell+o(1)\big)n,\forall\ell\in\diag_\rect}+o(1)\,.
    \end{align*}
    Using Proposition~\ref{prop:diag}, which tells us that the minimal inversion on the different splittings are independent conditionally given their sizes, we see that
    \begin{align*}
        \P[\Big]{\minv_n>tn^\beta}
        &=\prod_{\ell\in\diag_\rect}\P*{\minv_n(S_\ell)>tn^\beta~\bigg|~N_\ell=\big(\delta_\ell+o(1)\big)n}+o(1)\\
        &=\prod_{\ell\in\diag_\rect}\P*{\minv_n(S_\ell)>\big(t\delta_\ell^{-\beta}+o(1)\big)N_\ell^\beta~\bigg|~N_\ell=\big(\delta_\ell+o(1)\big)n}+o(1)\,.
    \end{align*}
    It now suffices to apply Proposition~\ref{prop:diag} again, along with the fact that $\alpha_\ell\geq\alpha$, to transform this probability into
    \begin{align*}
        \P[\Big]{\minv_n>tn^\beta}
        &=\prod_{\ell\in\diag_\rect}\e*{-\delta_\ell^\alpha\big(t\delta_\ell^{-\beta}+o(1)\big)^{\alpha+1}\I{\alpha_\ell=\alpha}\int_\Delta\phi_\ell}+o(1)\\
        &=\e*{-t^{\alpha+1}\int_\Delta\left(\sum_{\ell\in\diag_\rect}\phi_\ell\I{\alpha_\ell=\alpha}\right)}+o(1)\,.
    \end{align*}
    Using the definition of $\phi$ from the tile-standard definition in Section~\ref{sec:tile} and recalling that $\mass_\rect=0$ when $\diag_\rect\neq\emptyset$, this is exactly the desired convergence as stated in the theorem.
\end{proof}

\section{Useful examples}

In this section, we provide specific examples of permutons to highlight the interest and limits of our results.
In Section~\ref{sec:standard example}, we show that any $\alpha>0$ admits a $(\alpha,\phi)$-standard permuton (for an appropriate $\phi$).
In Section~\ref{sec:counter}, we provide a few non-standard permutons, highlighting the roles of the assumptions and the difficulty in characterizing universal minimal inversion behaviour.

\subsection{A family of standard permutons}\label{sec:standard example}

For any $\alpha>0$, consider the function $h:[0,1]^2\rightarrow[0,\infty)$ defined by
\begin{align}\label{eq:gamma}
    h(x,y)
    =\left\{\begin{array}{ll}
        (x+y)^{\alpha-1} & \textrm{if $x+y\leq1$} \\
        (2-x-y)^{\alpha-1} & \textrm{if $x+y>1$}\,.
    \end{array}\right.
\end{align}
When $\alpha<1$, we simply let $h(0,0)=h(1,1)=0$, which does not affect the value of $h$ upon integration.
We observe that $h$ is symmetric around $y=1-x$ and we provide a representation of $h$ for different values of $\alpha$ in Figure~\ref{fig:gamma}.
We now show that $h$ can be transformed into the density of a $(\alpha,2\phi)$-standard permuton, with $\phi(x,y)=\alpha^{\alpha+1}(x+y)^{\alpha-1}$.

\begin{figure}[htb]
    \centering
    \includegraphics[width=\textwidth]{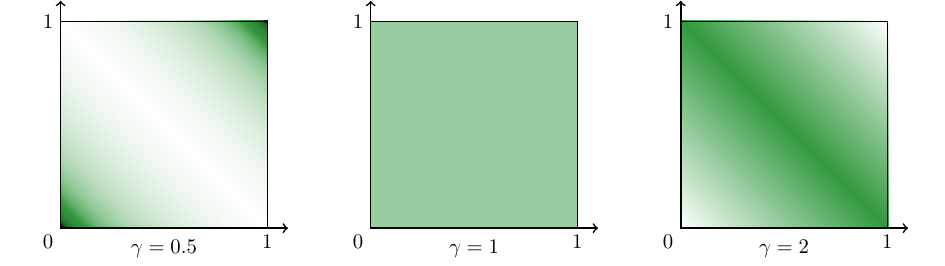}
    \caption{
    A heatmap of the function $h$ from~\eqref{eq:gamma} for different values of $\alpha$.
    When $\alpha<1$, the function diverges around $(0,0)$ and $(1,1)$.
    When $\alpha=1$, the function is constant equal to $1$ on $[0,1]^2$.
    When $\alpha>1$, the function is minimal at $(0,0)$ and $(1,1)$, with value $0$, and maximal on $\{(x,1-x):x\in[0,1]\}$, with value $1$.
    Observe that $h$ is a proper permuton only when $\alpha=1$, however it can be transformed into a permuton, using~\eqref{eq:gamma marginal}, and this permuton is of the same order as $h$ around $(0,0)$ and $(1,1)$, as shown in~\eqref{eq:gamma hf}.
    }
    \label{fig:gamma}
\end{figure}

Denote by $\eta(t)=\int_0^t\int_0^1h$ the marginal of $h$, both for $x$ and $y$ thanks to the symmetry of $h$.
Then $\eta$ is a strictly monotone function and the function
\begin{align}\label{eq:gamma marginal}
    f(x,y)
    =\frac{h\big(\eta^{-1}(x),\eta^{-1}(y)\big)}{\eta'\big(\eta^{-1}(x)\big)\cdot\eta'\big(\eta^{-1}(y)\big)}
\end{align}
is a proper permuton density, with uniform marginals.
We now show that $f$ is $(\alpha,2\phi)$-standard with $\phi(x,y)=\alpha^{\alpha+1}(x+y)^{\alpha-1}$.

Since $h>0$ on $(0,1)^2$, so is $f$ and the corresponding permuton is corner-balanced.
Furthermore, by the symmetry of $h$ (and thus also of $f$), it suffices to check that $f$ is BL-smooth with parameters $\alpha$ and $\phi$.
To prove this, compute $\eta$ to find that, as $t\rightarrow0$, we have
\begin{align*}
    \eta(t)
    =\frac{2t}{\alpha}-\frac{1}{\alpha(\alpha+1)}\Big[1+t^{\alpha+1}-(1-t)^{\alpha+1}\Big]
    \sim\frac{t}{\alpha}\,.
\end{align*}
It follows that $\eta'(t)\sim1/\alpha$ which, combined with the definition of $f$ and $h$, leads to
\begin{align}\label{eq:gamma hf}
    f(tx,ty)
    \sim\frac{h(\alpha tx,\alpha ty)}{\alpha^{-1}\cdot\alpha^{-1}}
    \sim t^{\alpha-1}\alpha^{\alpha+1}(x+y)^{\alpha-1}\,.
\end{align}
This is exactly the definition of BL-smoothness, with the appropriate $\alpha$ and $\phi$.
Thus $f$ is $(\alpha,2\phi)$-standard, with $\alpha>0$ being arbitrary.
It is worth mentioning that there actually exists a lot more standard permutons with the same $\alpha$, for example by replacing $(x+y)^{\alpha-1}$ with $(ax+by)^{\alpha-1}$ in the definition of $h$, but we focused here on the existence of a single one and thus chose the simplest (and symmetric) case.

\subsection{Counter example}\label{sec:counter}

Since the smoothness assumption plays a key role in the limiting behaviour of the minimal inversion and is explored in details in Section~\ref{sec:bl}, we do not further explore the role of this assumption here.
Thus, in this section, and to conclude this work, we explore the role of the corner-balanced property via an example of a non standard permuton.

The role of having points in the top-left and bottom-right corner to prevent middle points from having too few inversions is visible in the statements of Lemma~\ref{lem:top-bottom} and Lemma~\ref{lem:left}.
However, upon working out the detailed statements for the necessary assumptions, another version of the corner-balanced propery was considered, which we refer to here as the \textit{weak corner-balanced} property, defined by the following statement.
For any $\epsilon>0$, there exists $0<\rho<1$ and $\delta>0$ such that
\begin{align*}
    \min\Big\{\mu\big([0,\epsilon]\times[\rho-\delta,1]\big),\mu\big([1-\epsilon,1]\times[0,\rho+\delta]\big)\Big\}
    >0\,.
\end{align*}
This weaker assumption allows for a slight extension of the value of $\rho$ and was meant to cover cases such as the permuton with uniform mass on $[0,1/2]^2\cup[1/2,1]^2$ (the empirical permuton of the identity on $\{1,2\}$).
Moreover, one might expect that the number of points within the band $[0,1]\times[\rho-\delta,\rho+\delta]$ is small enough that it eventually does not substantially affect the minimal inversion.
This expectation, however, is wrong, as shown by the following permuton.

Consider the permuton $\mu$ defined as follows.
First, $\mu$ gives mass $12/5$ uniformly to the two squares $[0,1/3]^2$ and $[2/3,1]^2$.
Then, it gives a mass on the rectangles $[0,1/3]\times[1/3,1/2]$, $[1/3,1/2]\times[0,1/3]$, $[1/2,2/3]\times[2/3,1]$, and $[2/3,1]\times[1/2,1/3]$ which decreases linearly going down from $[0,1/3]^2$ and $[2/3,1]^2$ towards $y=1/2$ and $x=1/2$.
Finally, it gives a mass to the portion of line $\{(x,x):1/3\leq x\leq2/3\}$ such that the marginals of $\mu$ are uniform.
In more formal terms, and by letting $f:[0,1]^2\rightarrow[0,\infty)$ be the function given by
\begin{align*}
    f(x,y)
    =\frac{12}{5}\left\{\begin{array}{ll}
        1 & \textrm{if $x,y\leq1/3$} \\
        3(1-2y) & \textrm{if $x\leq1/3$ and $1/3\leq y\leq1/2$} \\
        3(1-2x) & \textrm{if $y\leq1/3$ and $1/3\leq x\leq1/2$} \\
        0 & \textrm{otherwise}\,,
    \end{array}\right.
\end{align*}
then, for any $S\subseteq[0,1]^2$, $\mu$ satisfies
\begin{align}\label{eq:weird}
    \mu(S)
    =\int_Sf+\int_{S^{-1}}f+\int_{1/3}^{2/3}\left[1-\frac{12}{5}\big|1-2x\big|\right]\I{(x,x)\in S}dx\,,
\end{align}
where $S^{-1}=\{(1-x,1-y):(x,y)\in S\}$ is the symmetric of $S$ with respect to $y=1-x$.
We provide a visual representation of $\mu$ in Figure~\ref{fig:weird} and hope that the combination of these three definitions of $\mu$ help the reader understand its structure.

\begin{figure}[htb]
    \centering
    \includegraphics[width=\textwidth]{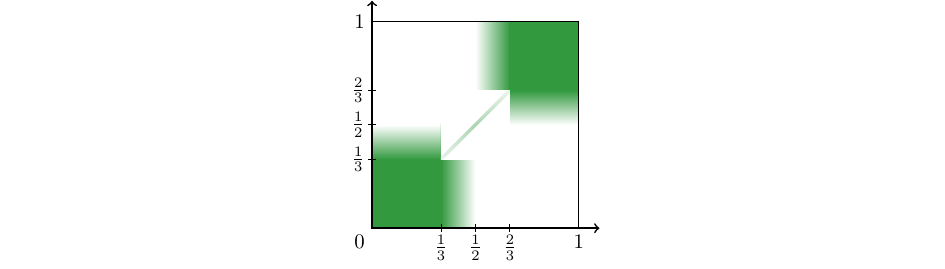}
    \caption{
    A heatmap of the permuton $\mu$ from~\eqref{eq:weird}.
    It has a density on $[0,1]^2\setminus[1/3,2/3]^2$ which is equal to $12/5$ on $[0,1/3]^2$ and on $[2/3,1]^2$, and then decays linearly in $x$ and $y$ to reach $0$ when $x=1/2$ or $y=1/2$.
    The value of $12/5$ is chosen so that its marginals are uniform on $[0,1/3]$ and on $[2/3,1]$.
    Past $1/3$, we complement the distribution with a mass on $\{(x,x):1/3\leq x\leq2/3\}$ evolving as a triangle function, going from its minimal value of $1/5$ when $x=1/3$ or $x=2/3$, to its maximal value of $1$ when $x=1/2$.
    Again, this choice is driven by the desire to have uniform marginals.
    }
    \label{fig:weird}
\end{figure}

While its definition might be intricate, the fact that $\mu$ is a permuton mostly follows from two facts.
First, for any $0\leq x\leq1/3$, $f$ is exactly defined so that its integral over $y\in[0,1]$ is equal to $1$.
Second, the function within the integral from $1/3$ to $2/3$ is exactly defined so that, for any $1/3\leq x\leq1/2$, it sums to $1$ when combined with the integral of $f$ over $y\in[0,1]$.
These two points make the $x$ marginals of $\mu$ uniform on $[0,1/2]$ and two symmetry arguments (one around $x=1/2$ and one around $y=1-x$) then lead to $\mu$ being a positive measure with uniform marginals, thus being a permuton.

It is clear from its definition that $\mu$ is BL and TR-smooth, both with parameters $\alpha$ and $\phi$ constant equal to $12/5$, but, it is not corner-balanced since $\mu([0,1/2]^2\cup[1/2,1]^2)=1$ (see Lemma~\ref{lem:tile ass}).
However, we also see that $\mu$ is weak corner-balanced, since $\mu([0,\epsilon]\times[1/2-\delta,1])>0$ and $\mu([1-\epsilon,1]\times[0,1/2+\delta])>0$ for any $\epsilon>0$ and $\delta>0$.
Thus, if the weak corner-balanced property were enough, then we should have that $\minv_n^\mu$ diverges, but this is not the case, as the minimal inversion in that case is actually $0$ with high-probability, as stated in the following proposition.

\begin{proposition}
    Let $\mu$ be the permuton from~\eqref{eq:weird}.
    Let $\minv_n^\mu$ be the minimal inversion of $n$ points distributed according to $\mu$, as defined in~\eqref{eq:main}.
    Then, the random variable $\minv_n^\mu$ converges to $0$ in distribution (and equivalently in probability).
\end{proposition}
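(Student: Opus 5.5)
The plan is to exhibit, with high probability, a sample point whose inversion set is empty. The candidate points are those on the diagonal segment $\{(x,x):1/3\leq x\leq2/3\}$ with $x$ very close to $1/2$: the singular part of $\mu$ in~\eqref{eq:weird} has linear density $1-\frac{12}{5}|1-2x|$, which equals $1$ at $x=1/2$. Around that point the absolutely continuous part of $\mu$ vanishes quadratically in the relevant regions. Fix a window $s=s_n=n^{-3/4}$ and let $W_s=\{(x,x):|x-1/2|\leq s\}$.

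\textbf{Step 1 (a diagonal point exists).} The number of sample points in $W_s$ is binomial with size $n$ and parameter $\int_{1/2-s}^{1/2+s}\big(1-\frac{12}{5}|1-2x|\big)dx\sim2s$. Its mean is therefore $\sim2n^{1/4}\to\infty$, so $W_s$ contains at least one point with high probability.

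\textbf{Step 2 (localizing inversions).} Take a point $(x,x)$ with $|x-1/2|\leq s$, say $x\leq1/2$ (the case $x\geq1/2$ follows by the symmetry with respect to $y=1-x$).
\begin{itemize}
    \item Other diagonal points never form an inversion with it.
    \item An inversion with a point $(X',Y')$ requires either $X'<x<Y'$ or $Y'<x<X'$.
    \item Inspecting the support of $f$ and of its symmetric copy on $S^{-1}$, the only mass satisfying the first condition lies in $[0,1/3]\times[1/2-s,1/2]$, and the only mass satisfying the second lies in $[1/2-s,1/2]\times[0,1/3]$.
    \item The symmetric pieces $[1/2,2/3]\times[2/3,1]$ and $[2/3,1]\times[1/2,2/3]$ meet neither condition, since $x\leq1/2$.
\end{itemize}
Doing the same for $x\geq1/2$, every point of $W_s$ has its inversions inside the union $U_s$ of four rectangles of width $s$ adjacent to the lines $x=1/2$ and $y=1/2$.

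\textbf{Step 3 ($U_s$ is empty).} The density there is $\frac{36}{5}(1-2y)$ (or $\frac{36}{5}(1-2x)$). Integrating gives $\mu$-mass $\frac{1}{3}\cdot\frac{36}{5}\int_0^s2u\,du=\frac{12}{5}s^2$ per rectangle, so $\mu(U_s)=O(s^2)$. By Markov's inequality the probability that some sample point lies in $U_s$ is at most $n\,\mu(U_s)=O(n^{-1/2})\to0$.

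Combining the three steps, with high probability some point of $W_s$ is present and $U_s$ is empty. That point then has $|\inv_\pts(i)|=0$, so $\minv_n^\mu=0$ with high probability, which gives convergence in probability (hence in distribution) to $0$.

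The main obstacle is Step 2: one must carefully check against the four pieces of $f$ and their reflections that no other region contributes inversions to a near-central diagonal point. The rest is binomial bookkeeping. The choice of $s$ only needs $ns\to\infty$ and $ns^2\to0$, so any $s=n^{-\theta}$ with $1/2<\theta<1$ works.
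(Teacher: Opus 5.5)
Your proposal is correct and follows the paper's argument: with high probability some sample point lands on the diagonal within distance $\delta$ of $(1/2,1/2)$ (binomial parameter $2\delta-\frac{24}{5}\delta^2$), while the off-diagonal mass within distance $\delta$ of the lines $x=1/2$ and $y=1/2$ (parameter $\frac{48}{5}\delta^2$) receives no point; the paper takes $\delta=n^{-2/3}$ where you take $s=n^{-3/4}$. Your Step~2 also checks in detail that the inversions of such a diagonal point are confined to these four rectangles, which the paper only asserts.
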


\begin{proof}
    Fix $\delta=\delta_n=n^{-2/3}$ and assume from now on that $n$ is large enough so that $\delta<1/6$.
    Denote by $N_0$ the number of points within the union of $[0,1]\times[1/2-\delta,1/2+\delta]$ and $[1/2-\delta,1/2+\delta]\times[0,1]$ but outside of the diagonal $y=x$.
    Further denote by $N_d$ the number of points belonging to $\{(x,x):|x-1/2|\leq\delta\}$.
    We observe that, in the case where $N_d\geq1$ but $N_0=0$, then the points corresponding to $N_d$ do not form any inversion with other points from the set.
    Thus, we see that
    \begin{align*}
        \P{\minv_n^\mu=0}
        \geq\P{N_d\geq1,N_0=0}\,.
    \end{align*}
    Further observe that $N_0$ and $N_d$ are both binomial random variable of size $n$ and with respective parameters $p_0$ and $p_d$.
    To prove the desired result, it thus suffices to show that $np_0\rightarrow0$ and $np_d\rightarrow\infty$.

    For $p_0$, since we ignore the mass from the diagonal, we can completely use the symmetry of the distribution of $\mu$ with respect to $f$ and see that
    \begin{align*}
        p_0
        =4\int_{[1/2-\delta,1/2]\times[0,1/3]}f
        =\frac{48}{5}\int_{1/2-\delta}^{1/2}(1-2x)dx
        =\frac{48}{5}\delta^2\,.
    \end{align*}
    Recalling that $\delta=n^{-2/3}$, it follows that $np_0\rightarrow0$.
    On the other hand, for $p_d$, we use the symmetry of the measure and directly integrate the diagonal term to obtain that
    \begin{align*}
        p_d
        =2\int_{1/2-\delta}^{1/2}\left[1-\frac{12}{5}(1-2x)\right]dx
        =2\delta-\frac{24}{5}\delta^2\,.
    \end{align*}
    It is worth observing that $p_0+2p_d=4\delta$, as expected given the uniform marginals of $\mu$.
    Recalling again that $\delta=n^{-2/3}$, we see that $np_d\sim2n^{1/3}\rightarrow\infty$.
    This proves that $\minv_n^\mu=0$ with high probability, as desired.
\end{proof}

\section*{Acknowledgement}

The author would like to thank Sumit Mukherjee for insightful discussions on the topic, as well as the Centre International de Rencontres Math\'ematiques (CIRM) for the fostering environment they provided.
This project was made possible thanks to the support of the Department of Mathematics from the National University of Singapore.

\bibliographystyle{alpha}
\bibliography{main}

\end{document}